\pdfoutput=1 

\documentclass[11pt,a4paper,leqno]{article}

\usepackage{a4wide}

\usepackage[p,osf]{scholax}
\usepackage{amsmath,amsthm}
\usepackage[scaled=1.075,ncf,vvarbb]{newtxmath}

\usepackage{microtype}

\usepackage[only,llbracket,rrbracket]{stmaryrd}

\usepackage{graphicx}

\usepackage{enumitem}
\setlist[enumerate,1]{label=(\alph*)}
\setlist[enumerate,2]{label=(\roman*)}

\usepackage[T1]{fontenc}
\usepackage[utf8x]{inputenc}
\usepackage[usenames,dvipsnames]{xcolor}

\usepackage{url}
\usepackage[hidelinks,colorlinks=true,linkcolor=blue,citecolor=darkgray,linktocpage=false]{hyperref}

\usepackage{mathtools}
\mathtoolsset{showonlyrefs}

\usepackage{dsfont}

\newtheoremstyle{citing}
  {3pt}
  {3pt}
  {\itshape}
  {}
  {\bfseries}
  {.}
  {.5em}
  {\thmnote{#3}}
\theoremstyle{citing}

\theoremstyle{definition}

\theoremstyle{remark}

\swapnumbers
\theoremstyle{plain}

\newtheorem{theorem}{Theorem}[section]

\newtheorem{lemma}[theorem]{Lemma}

\newtheorem{corollary}[theorem]{Corollary}

\theoremstyle{remark}
\newtheorem{remark}[theorem]{Remark}
\newtheorem*{remark*}{Remark}

\theoremstyle{definition}
\newtheorem{definition}[theorem]{Definition}
\newtheorem*{definition*}{Definition}
\newtheorem{miniremark}[theorem]{}

\newtheorem*{question*}{Question}

\theoremstyle{definition}
\swapnumbers

\usepackage{chngcntr}

\counterwithin*{equation}{theorem}
\counterwithin*{enumi}{theorem}

\newcommand{\J}{\mathbf{J}}

\newcommand{\ccspace}[1]{\mathscr{K}(#1)}

\newcommand{\dspace}[2]{\mathscr{D}(#1,#2)}

\newcommand{\project}[1]{{#1}_\natural}

\newcommand{\eqproject}[1]{(#1)_\natural}

\newcommand{\grass}[2]{\mathbf{G}(#1,#2)}

\newcommand{\oball}[2]{\mathbf{U}(#1,#2)}

\newcommand{\cball}[2]{\mathbf{B}(#1,#2)}

\ifdefined\textint
    \renewcommand{\textint}[2]{{\textstyle\int_{#1}^{#2}}}
\else    
    \newcommand{\textint}[2]{{\textstyle\int_{#1}^{#2}}}
\fi

\ifdefined\textfint
    \renewcommand{\textfint}[2]{{\textstyle\fint_{#1}^{#2}}}
\else    
    \newcommand{\textfint}[2]{{\textstyle\fint_{#1}^{#2}}}
\fi

\ifdefined\textsum
    \renewcommand{\textsum}[2]{{\textstyle\sum_{#1}^{#2}}}
\else  
    \newcommand{\textsum}[2]{{\textstyle\sum_{#1}^{#2}}}
\fi

\ifdefined\textint
    
\else    
    
\fi

\newcommand{\natp}{\mathscr{P}}

\newcommand{\R}{\mathbf{R}}

\newcommand{\LM}{\mathscr{L}}

\newcommand{\Lp}[1]{\mathbf{L}_{#1}}

\DeclareMathOperator{\dist}{dist}

\newcommand{\HM}{\mathscr{H}}

\newcommand{\restrict}{ \mathop{ \rule[0pt]{.5pt}{6pt} \rule[0pt]{4pt}{0.5pt} }\nolimits }

\newcommand{\ud}{\ensuremath{\,\mathrm{d}}}

\newcommand{\uD}{\ensuremath{\mathrm{D}}}
\DeclareMathOperator{\trace}{trace}

\DeclareMathOperator{\Hom}{Hom}

\DeclareMathOperator{\Bdry}{Bdry}

\newcommand{\cnt}[1]{\mathscr{C}^{#1}}

\DeclareMathOperator{\dmn}{dmn}

\DeclareMathOperator{\grad}{grad}

\DeclareMathOperator{\spt}{spt}

\DeclareMathOperator{\Lip}{Lip}

\DeclareMathOperator{\card}{card}

\newcommand{\without}{\!\sim\!}

\DeclareMathOperator{\im}{im}

\newcommand{\vwedge}[1]{{\textstyle \boldsymbol{\bigwedge}_{#1}}}

\title{Regularity for $H$-systems of bounded distortion}
\author{S{\l}awomir Kolasi{\'n}ski}
\date{\today}

\begin{document}

\maketitle

\begin{abstract}
    We consider weak solutions $u \in W^{1,n}(U,\R^{n+1})$ of the $H$-system
    $- \operatorname{div} ( |\uD u|^{n-2} \uD u ) = (H \circ u) \, \J u$, where $n \ge 2$, $U$ is
    the open unit ball in~$\R^n$, and $H$ is bounded.  We show that $u$ is locally bounded if its
    distortion $|\uD u|^n / |\J u|$ is bounded on the set where $|u|$ is large. By a result of
    Fusco, Kristensen, Leone, and Verde, it follows that $u$ is locally H{\"o}lder continuous if $H$
    is continuous, and continuously differentiable with locally H{\"o}lder continuous derivative if
    $H$ is locally Lipschitz. The proof pushes forward $|\uD u|^{n-2} \uD u \circ \uD u^*$ by~$u$ to
    a linear functional on continuous compactly supported maps of $\R^{n+1}$ into
    $\Hom(\R^{n+1},\R^{n+1})$ , whose values on derivatives are controlled by the equation. When the
    distortion is bounded, this functional satisfies the hypotheses of an inequality of
    Michael--Simon type for measures satisfying a first-order partial differential equation, due to
    De~Philippis, Gennaioli, Pigati, and Rindler. The inequality yields lower bounds for the mass
    ratios of the push-forward of $|\uD u|^n \LM^n$ by~$u$, and these imply boundedness.
\end{abstract}

\section{Introduction}
\label{sec:intro}

Let $n \ge 2$ be an integer, $U = \oball 01 \subseteq \R^n$, and $H : \R^{n+1} \to \R$ be
bounded. We consider weak solutions $u \in W^{1,n}(U,\R^{n+1})$ of the $H$-system
\begin{displaymath}
    - \operatorname{div} \bigl( |\uD u|^{n-2} \uD u \bigr) = (H \circ u) \, \J u \,,
\end{displaymath}
where $\J u$ is the vector product of the partial derivatives of~$u$; see~\ref{def:jacobian}.

For $n = 2$ the regularity theory is well understood. Wente~\cite[Theorem~5.3]{Wen69} proved that weak solutions
are continuous when $H$ is constant and the solution agrees on the boundary of the disc with a
harmonic map of class $W^{1,2}$ which is continuous on the closed
disc. Gr{\"u}ter~\cite[3.9(i)]{Gru81} proved continuity of conformal solutions for bounded~$H$, and
Bethuel~\cite[Theorem~1]{Bet92} of all weak solutions for bounded
Lipschitz~$H$. Rivi{\`e}re~\cite[Theorem~I.1, (I.15)]{Riv07} proved continuity for every bounded~$H$,
confirming a conjecture of Heinz. He wrote the system as an elliptic system with an antisymmetric
potential and proved continuity for all such systems.

For $n \ge 3$, Duzaar and Fuchs~\cite[Theorem~2.2]{DF91} proved that bounded weak solutions are continuous
when $H$ is bounded and Lipschitz. Mou and Yang~\cite[Theorem~16]{MY96} proved continuity of
conformal solutions for bounded~$H$, extending Gr{\"u}ter's result. For constant $H$ they proved
that all weak solutions are of class $C^{1,\alpha}$~\cite[Theorem~3.6]{MY96b}. Wang~\cite[Theorem~1]{Wan99} proved that all weak solutions are
of class $C^{1,\alpha}$ when
$\sup |H| + \sup \{ (1 + |y|) \, \|\uD H(y)\| : y \in \R^{n+1} \} < \infty$. Duzaar and
Grotowski~\cite[Theorems~4.4 and~6.1]{DG00} obtained solutions of the Dirichlet problem by minimisation and
proved that they are H{\"o}lder continuous. For bounded globally Lipschitz $H$, Martino and
Schikorra~\cite[Corollary~1.8]{MaSc24} proved continuity under the assumption
$\uD u \in \Lp{(n,n/(n-1))}(U,\Hom(\R^n,\R^{n+1}))$, where $\Lp{(p,q)}$ denotes the Lorentz
space. Mi{\'s}kiewicz, Petraszczuk, and Strzelecki~\cite[Theorem~1.1]{MPS23} proved local H{\"o}lder
continuity for bounded Lipschitz $H$ when $n$ is even and $u \in W^{n/2,2}(U,\R^{n+1})$. The latter
result was announced for $n = 4$ in~\cite[Proposition~3.6]{SS17} and for even $n > 4$
in~\cite[Remark~3.7]{SS17}. Earlier results under additional hypotheses on~$u$ are surveyed
in~\cite[Theorem~3.3]{SS17}; see also~\cite[Theorem~1.1]{Kol10}.

Martino and Schikorra~\cite[Theorem~1.2]{MaSc26} and Schlagenhauf~\cite[Theorem~1.2]{Sch26} constructed bounded
discontinuous maps $U \to \R^{n+2}$ for $n \ge 3$ solving $n$-Laplace systems with antisymmetric
potentials in $\Lp{n}$. These examples concern a larger class of equations and give no
counterexample for $H$-systems; see~\cite{Sch26}.

Without additional hypotheses, the continuity of weak solutions for $n \ge 3$ is open even for
bounded Lipschitz~$H$; see~\cite[Problem~2.4]{SS17}.

Fusco, Kristensen, Leone, and Verde~\cite[Proposition~1.1]{FKLV18} proved that locally bounded weak
solutions are locally H{\"o}lder continuous when $H$ is continuous. When $H$ is locally Lipschitz,
they proved that such solutions are of class~$1$ and that their derivatives are locally H{\"o}lder
continuous. Thus, for continuous $H$, it suffices to show that $u$ is locally bounded.

The push-forward of the energy measure~$|\uD u|^n \LM^n$ by~$u$ is a measure~$\mu$
on~$\R^{n+1}$. Suppose that $\mu \, \oball ar \ge c$ for every point~$a \in \spt \mu$ far from
$u [\Bdry U]$, with fixed $r > 0$ and $c > 0$. Since
$\mu(\R^{n+1}) = \textint{}{}|\uD u|^n \ud \LM^n < \infty$, the support can then contain only
finitely many points at mutual distance at least $2r$ and far from $u [\Bdry U]$, and $u$ is
bounded; see~\ref{thm:bounded}. If $u$ is conformal, Mou and Yang~\cite[section~4.3]{MY96} obtain
such a~mass bound by testing the equation with $u - a$ multiplied by a cut-off function of
$|u - a|$, and integrating the resulting differential inequality for
$[ \rho \mapsto \mu \, \oball a\rho ]$. The differential inequality uses conformality only through
the inequality $|\uD u(x)^*(v)|^2 \le n^{-1} |\uD u(x)|^2 |v|^2$ for $v \in \R^{n+1}$. For general
$u$ one pushes forward $|\uD u|^{n-2} \uD u \circ \uD u^*$ by~$u$. This gives a linear functional
$\Sigma$ on the space of continuous compactly supported maps of $\R^{n+1}$ into
$\Hom(\R^{n+1},\R^{n+1})$, and the
$H$-system controls the values of $\Sigma$ on derivatives. The density $X$ of $\Sigma$ with respect
to $\mu$, given by $\Sigma(F) = \textint{}{}F \bullet X \ud \mu$, is a weighted average over the
fibres of~$u$ of the maps $|A|^{-2} A \circ A^*$ with $A = \uD u(x)$. If $A$ is conformal, this map
equals $n^{-1} \eqproject{\im A}$; see~\ref{mr:distortion}. Otherwise its largest eigenvalue
exceeds~$n^{-1}$, and the inequality used in~\cite{MY96} fails.

De Philippis, Gennaioli, Pigati, and Rindler~\cite[Theorem~1.1]{DGPR26} proved a very general
inequality of Michael--Simon type~\cite{MS73} for vector-valued measures whose divergence is again
a~measure. Their proof adapts the heat-flow method introduced by Gennaioli and Rindler~\cite{GR26},
who used it for the resolution of the Vanishing Mass Conjecture of Bouchitt{\'e} for first-order
operators. Our functional $\Sigma$ represents a vector-valued measure (see~\ref{mr:DGPR:language})
which meets all the requirements of~\cite[Theorem~1.1]{DGPR26} whenever the distortion
$|\uD u|^n / |\J u|$ of~$u$ is bounded. In particular $X$ takes values in a~compact convex set on
which the $n$-th singular value is bounded from below; see~\ref{lem:K}, \ref{lem:structure},
and~\ref{mr:DGPR}. This gives lower mass ratios in~\ref{lem:lower-mass}, boundedness
in~\ref{thm:bounded}, and continuity in~\ref{cor:continuity}.
\begin{theorem}[\protect{cf.~\ref{cor:continuity}}]
    If $H$ is bounded and continuous and the distortion of~$u$ is bounded on the set where $|u|$ is
    large, then $u$ is locally H{\"o}lder continuous.  If moreover $H$ is locally Lipschitz, then
    $u$ is of class~$1$ and its derivative is locally H{\"o}lder continuous.
\end{theorem}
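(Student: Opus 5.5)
The plan is to reduce the statement to local boundedness of~$u$ and then invoke the result of Fusco, Kristensen, Leone, and Verde~\cite[Proposition~1.1]{FKLV18}. That result gives local H{\"o}lder continuity for continuous bounded~$H$. When $H$ is locally Lipschitz, it gives class~$1$ with locally H{\"o}lder continuous derivative. Local boundedness is a local property, so I will prove it on every ball $\cball{x_0}{s} \subseteq U$, rescaling so that the ball becomes~$U$; the equation and the distortion hypothesis are invariant under this rescaling. The substance is therefore local boundedness, which I would obtain along the lines of~\ref{thm:bounded}.

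\emph{Step 1: the push-forward functional.} Let $\mu = u_\#(|\uD u|^n \LM^n)$, and define $\Sigma(F) = \textint{U}{} F(u) \bullet (|\uD u|^{n-2} \uD u \circ \uD u^*) \ud \LM^n$ for continuous compactly supported $F : \R^{n+1} \to \Hom(\R^{n+1},\R^{n+1})$. Testing the $H$-system with $\varphi \circ u$, where $\varphi$ is smooth with compact support away from $u[\Bdry U]$, gives
\[
\Sigma(\uD \varphi) = \textint{U}{} (H \circ u) \, \J u \bullet (\varphi \circ u) \ud \LM^n .
\]
Since $|\J u| \le |\uD u|^n$ up to a dimensional constant, the right-hand side is bounded by $C \sup|H| \int |\varphi| \ud\mu$. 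Hence the distributional divergence of $\Sigma$ is a measure, absolutely continuous with respect to~$\mu$ with bounded density, away from the boundary image. (Passing to smooth approximations of $u$ is needed to justify the chain rule for $\varphi \circ u$ with $u \in W^{1,n}$.)

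\emph{Step 2: structure of the density.} Write $\Sigma = X \mu$. At points where $|u|$ is large, the distortion bound puts each map $|A|^{-2} A \circ A^*$, with $A = \uD u(x)$, in a compact convex set~$K$ whose $n$-th singular value is bounded below, as in~\ref{lem:K} and~\ref{lem:structure}. Averaging over the fibres of $u$ preserves membership in~$K$. This is the step I expect to be the main obstacle. The fibres may be large and the averaging must be carried out carefully via the coarea formula. In addition, the set where $|u|$ is small, where distortion is uncontrolled, has to be excluded; I would do this by localizing $\varphi$ to $\{|y| > R\}$.

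\emph{Step 3: lower mass ratios and boundedness.} With Steps~1 and~2 in hand, the hypotheses of \cite[Theorem~1.1]{DGPR26} (see~\ref{mr:DGPR}) hold. The resulting Michael--Simon type inequality, applied to cut-offs of $|y-a|$ and integrated as a differential inequality in the radius, gives $\mu\,\oball ar \ge c\, r^n$ for small $r$. This holds for $a \in \spt\mu$ with $|a| > R + 1$ and $a$ far from $u[\Bdry U]$, as in~\ref{lem:lower-mass}. Now suppose $u$ were unbounded near an interior point. Since $u[\cball{x_0}{s}]$ is essentially connected (a Sobolev map on a ball), one could pick infinitely many disjoint balls $\oball{a_i}{r}$ centred in~$\spt\mu$ with $|a_i|$ large. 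This contradicts $\mu(\R^{n+1}) < \infty$, so $u$ is locally bounded, and \cite{FKLV18} completes the proof.
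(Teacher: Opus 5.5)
Your overall route is the paper's: local boundedness as in \ref{thm:bounded}, then the result of Fusco, Kristensen, Leone, and Verde. The gap is in how you treat the boundary.

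You test with $\varphi \circ u$ for $\varphi$ supported away from $u[\Bdry U]$, and in Step~3 you need centres $a$ with $|a|$ large and far from $u[\Bdry U]$. For $u \in W^{1,n}$, the boundary values on a sphere are only of class $W^{1-1/n,n}$. This is the critical case, so they need be neither continuous nor bounded. For instance, a map like $\log\log(1/|x-p|)$ with $p$ on the sphere has unbounded boundary values, and you cannot exclude this behaviour for $u$ a priori, since it is what you are trying to rule out. In that case $u[\Bdry U]$ is not a compact set. There may then be no admissible $\varphi$ and no far-away centres, and the covering argument collapses. Rescaling an arbitrary ball $\cball{x_0}s$ to $U$ does not help, because the same problem sits on $\Bdry \oball{x_0}s$.

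The missing idea is the choice of a good radius, \ref{lem:sphere} and \ref{def:regular}. For $\LM^1$ almost all $\rho$, the restriction of $u$ to $\Bdry \oball 0\rho$ agrees $\HM^{n-1}$ almost everywhere with a continuous $v$. This follows from the Sobolev embedding on $(n-1)$-spheres with exponent $n > n-1$, together with Fubini. Hence $K_\rho = v[\Bdry \oball 0\rho]$ is compact. At such $\rho$ one has $\Sigma_\rho(\uD g) = \tau_\rho(g) + \beta_\rho(g)$ with $\beta_\rho(g) = 0$ whenever $\spt g \cap K_\rho = \varnothing$.

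The paper proves this identity without trace theory. It tests with $\omega(|x|)\, g(u(x))$, applies Fubini in the radius and a Lebesgue-point argument, and uses a countable dense family of $g$ (\ref{mr:separable}) to get a single null set of bad radii for all $g$. Once $\rho$ is chosen this way, your Steps~1--3 go through as in \ref{lem:lower-mass} and \ref{thm:bounded}.

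Two further points.

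First, the appeal to essential connectedness of the image is not a precise statement for discontinuous $W^{1,n}$ maps. The rigorous replacement in \ref{thm:bounded} is short:
\begin{enumerate}
\item Once the lower mass ratios give $\spt \mu_\rho \subseteq \cball 0{R_1}$, the formula for $\mu_\rho$ on Borel sets in \ref{mr:push:measure} shows $\uD u = 0$ almost everywhere on $\{|u| > R_1\}$.
\item Take $\psi(y) = \gamma(|y|)\, y$, vanishing on $\cball 0{R_1}$ and equal to the identity outside $\cball 0{R_1+1}$. Then $\psi \circ u$ has zero derivative, so it is almost everywhere constant by the Poincar\'e inequality.
\item This bounds $u$.
\end{enumerate}

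Second, in Step~2 the relevant tool is the area formula for Sobolev maps (Haj{\l}asz), not the coarea formula. Since $n < n+1$ and $\J u \ne 0$ on the relevant set, the fibres are finite for $\HM^n$ almost all $y$. The same formula shows $\mu_\rho = \theta\, \HM^n \restrict M$ with $\theta \ge n^{n/2}$ away from $\cball 0{R_0}$. You need this in addition to $X \in \Psi_\kappa$. It supplies the hypothesis of \ref{mr:DGPR}, and it converts $\HM^n(M \cap \oball as)^{1/n}$ into $n^{-1/2} f(s)^{1/n}$ in the differential inequality.
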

For conformal $u$ the distortion equals $n^{n/2}$, and continuity is due to Mou and
Yang~\cite[Theorem~16]{MY96}. We do not know an earlier reference for bounded distortion greater
than~$n^{n/2}$. 

\section{Notation and the \texorpdfstring{$H$}{H}-system}
\label{sec:notation}

\begin{miniremark}
    \label{mr:notation}
    In this section we fix notation, define weak solutions and the distortion, and prove the
    estimate~\ref{lem:K} on which the argument rests. Throughout, $n \ge 2$ is an integer and
    $U = \oball 01 \subseteq \R^n$. The notation follows~\cite[pp.~669--671]{Fed69}. In particular
    $\natp$ denotes the set of positive integers, $\oball ar$ and $\cball ar$ are the open and the
    closed ball, $\LM^n$ and $\HM^k$ are the Lebesgue and the Hausdorff measure, $\mu \restrict A$
    is the restriction of $\mu$ to~$A$, and $\|T\|$ is the variation measure of a linear
    functional~$T$ on a space of continuous maps with compact support; see~\ref{mr:push}. The
    standard bases of $\R^n$ and $\R^{n+1}$ are $e_1, \ldots, e_n$ and $e_1, \ldots, e_{n+1}$.

    For linear maps $A$ and $B$ between Euclidean spaces with the same domain
    and the same codomain we set
    \begin{displaymath}
        A \bullet B = \trace (A^* \circ B) \,, \quad
        |A| = (A \bullet A)^{1/2} \,, \quad
        \|A\| = \sup \{ |A(v)| : |v| \le 1 \} \,.
    \end{displaymath}
    Then $\|A\| \le |A|$. If $A$ and $B$ are linear maps between Euclidean
    spaces and the domain of $A$ is the codomain of~$B$, then $|A \circ B|
    \le \|A\| |B|$.

    A vector space and its dual are not identified. For $\omega \in
    \Hom(\R^k,\R)$ and $b \in \R^N$ let $\omega \cdot b \in \Hom(\R^k,\R^N)$
    be given by $(\omega \cdot b)(w) = \omega(w) \, b$ for $w \in \R^k$; this is
    Allard's convention, cf.~\cite[2.2, 2.3]{All72}. If $o_1, \ldots, o_k$ is an
    orthonormal basis of $\R^k$ and $Z \in \Hom(\R^k,\R^N)$, then
    \begin{equation}
        \label{eq:notation:allard}
        (\omega \cdot b) \bullet Z
        = \textsum{i=1}{k} \omega(o_i) \, b \bullet Z(o_i)
        = \textsum{i=1}{k} \omega(o_i) \, Z^*(b) \bullet o_i
        = \omega \bigl( Z^*(b) \bigr) \,.
    \end{equation}
    If $\varphi$ is a real valued function of class~$1$ on an open subset
    of~$\R^k$, then $\grad \varphi(y) \in \R^k$ is defined by
    $\uD\varphi(y)w = \grad \varphi(y) \bullet w$ for $w \in \R^k$. If $g$ takes
    values in $\R^N$ and has class~$1$, then
    $\uD(\varphi g) = \varphi \uD g + \uD\varphi \cdot g$, and
    by~\eqref{eq:notation:allard}
    \begin{equation}
        \label{eq:notation:rank-one}
        \bigl( \uD\varphi(y) \cdot b \bigr) \bullet Z = b \bullet Z \bigl(
        \grad \varphi(y) \bigr)
        \quad \text{for $b \in \R^N$ and $Z \in \Hom(\R^k,\R^N)$} \,.
    \end{equation}
    We write $\cnt{1}_c(\R^k,\R^N)$ for the vector space of maps
    $g : \R^k \to \R^N$ of class~$1$ with compact support, normed by
    $\sup \im (|g| + |\uD g|)$.  If $X$ is a Euclidean space and $T \subseteq X$
    is a linear subspace, then we write $\project{T}$ for the orthogonal
    projection onto~$T$, cf.~\cite[T.1(9)]{Alm00}. In particular, if $\nu \in X$
    and $|\nu| = 1$, then $\eqproject{\R \nu} \in \Hom(X,X)$ denotes the
    orthogonal projection onto $\R \nu$, that is,
    $\eqproject{\R \nu}(x) = (x \bullet \nu) \nu$ for $x \in X$.

    For $A \in \Hom(\R^n,\R^{n+1})$ we have
    $\|\vwedge n A\| = | \vwedge n A | = |A e_1 \wedge \cdots \wedge A e_n|$,
    the norm on~$\vwedge n \R^{n+1}$ being that
    of~\cite[1.7.5]{Fed69}. By~\cite[1.7.5]{Fed69},
    $| \vwedge n A |^2 = \det (A^* \circ A)$; thus, $| \vwedge n A |$ is the $n$
    dimensional Jacobian of $A$ of~\cite[3.2.1]{Fed69}. Moreover
    $| \vwedge n A | = |A v_1 \wedge \cdots \wedge A v_n|$ for every orthonormal
    basis $v_1, \ldots, v_n$ of~$\R^n$. Let
    $\star : \vwedge p \R^{n+1} \to \vwedge {n+1-p} \R^{n+1}$ be the Hodge star
    operator denoted $*$ in~\cite[1.7.8]{Fed69}, defined with respect to the
    unit $(n+1)$ vector $e_1 \wedge \cdots \wedge e_{n+1}$; the symbol $*$ is
    reserved for adjoints.  Then $\star$ maps each element
    $e_{i_1} \wedge \cdots \wedge e_{i_p}$ of the standard orthonormal basis to
    the complementary one, up to the sign, and thus preserves the inner
    product. Moreover
    $\xi \wedge \star \eta = (\xi \bullet \eta) \, e_1 \wedge \cdots \wedge
    e_{n+1}$ for $\xi, \eta \in \vwedge p \R^{n+1}$, and
    $\star \star \xi = (-1)^{p(n+1-p)} \xi$. For $\zeta \in \vwedge n \R^{n+1}$
    and $w \in \R^{n+1}$, the choice $\eta = (-1)^n \star w$ gives
    $\star \eta = w$. Since $\star$ preserves the inner product,
    \begin{equation}
        \label{eq:notation:hodge}
        \zeta \wedge w = (-1)^n (\zeta \bullet \star w) \, e_1 \wedge \cdots
        \wedge e_{n+1} = (w \bullet \star \zeta) \, e_1 \wedge \cdots \wedge
        e_{n+1} \,.
    \end{equation}
    Define
    \begin{displaymath}
        \J (A) = \star (A e_1 \wedge \cdots \wedge A e_n)
        \quad \text{for $A \in \Hom(\R^n,\R^{n+1})$} \,.
    \end{displaymath}
    Then $|\J (A)| = | \vwedge n A |$ and $\J (A) \bullet A(v) = 0$ for
    $v \in \R^n$. Indeed, $\star$ preserves the inner product, and the $n+1$
    vectors $A e_1, \ldots, A e_n, A(v)$ lie in $\im A$, which has dimension at
    most~$n$, so that $A e_1 \wedge \cdots \wedge A e_n \wedge A(v) = 0$; now
    \eqref{eq:notation:hodge} applies.  If $A \ne 0$, we set
    \begin{displaymath}
        Y(A) = |A|^{-2} A \circ A^* \in \Hom(\R^{n+1},\R^{n+1}) \,.
    \end{displaymath}

    For $X \in \Hom(\R^d,\R^d)$ and $k \in \{1, \ldots, d\}$ let
    \begin{displaymath}
        \sigma_k(X) = \sup \bigl\{ \inf \{ |X(w)| : w \in W,\ |w| = 1 \}
        : W \in \grass dk \bigr\} \,.
    \end{displaymath}
    By the Courant--Fischer principle~\cite[Problem~III.6.1]{Bha97}, $\sigma_k(X)$ is
    the $k$-th largest singular value of~$X$, see~\cite[section~I.2]{Bha97}, and it is
    the $k$-th largest eigenvalue of $X$ if $X$ is symmetric and non-negative. In particular $\sigma_1(X) = \|X\|$.
\end{miniremark}

\begin{miniremark}
    \label{mr:separable}
    Suppose $k, N \in \natp$. There are
    $g_1, g_2, \ldots \in \cnt{1}_c(\R^k,\R^N)$ such that each
    $g \in \cnt{1}_c(\R^k,\R^N)$ is the limit of a subsequence of
    $g_1, g_2, \ldots$. Indeed, for each~$j \in \natp$, the compact metric space
    $\cball 0j \times \{1, \ldots, (k+1)N\}$ has a countable base, and the space
    of continuous real valued functions on it, normed by the supremum norm, is
    separable by~\cite[V.8.4]{DS58}. Mapping $g$ onto the function on
    $\cball 0j \times \{1, \ldots, (k+1)N\}$ whose values at
    $(y,1), \ldots, (y,(k+1)N)$ are the coordinates of
    $g(y), \uD g(y)(e_1), \ldots, \uD g(y)(e_k)$ with respect to the standard
    basis of~$\R^N$, where $e_1, \ldots, e_k$ is the standard basis of~$\R^k$,
    we embed the subspace
    $\cnt{1}_c(\R^k,\R^N) \cap \{ g : \spt g \subseteq \cball 0j \}$ linearly
    and homeomorphically into that space of functions. Hence,
    by~\cite[I.6.12]{DS58}, this subspace has a countable dense subset. The
    union of these subsets over~$j$ is countable, and each
    $g \in \cnt{1}_c(\R^k,\R^N)$ belongs to the closure of one of them.
\end{miniremark}

\begin{definition}
    \label{def:jacobian}
    Given a function $v$ mapping an open subset of~$\R^n$ into~$\R^{n+1}$
    we set
    \begin{displaymath}
        \J v(x) = \J (\uD v(x))
        \quad \text{for $x \in \dmn \uD v$} \,.
    \end{displaymath}
\end{definition}

\begin{definition}
    \label{def:hsys}
    Suppose $0 \le h < \infty$ and $H : \R^{n+1} \to \R$ is a Borel
    function with $|H(y)| \le h$ for $y \in \R^{n+1}$. We say that $u$ is a
    \emph{weak solution of the $H$-system} if $u \in W^{1,n}(U,\R^{n+1})$ and
    \begin{equation}
        \label{eq:hsys}
        \textint{U}{} |\uD u|^{n-2} \uD u \bullet \uD\psi \ud \LM^n
        = \textint{U}{} H(u(x)) \, \J u(x) \bullet \psi(x) \ud \LM^n x
    \end{equation}
    whenever $\psi \in W^{1,n}_0(U,\R^{n+1}) \cap \Lp{\infty}(U,\R^{n+1})$.
\end{definition}

\begin{lemma}
    \label{lem:hadamard}
    If $A \in \Hom(\R^n,\R^{n+1})$, then
    \begin{gather}
        \label{eq:hadamard:amgm}
        |\J (A)| \le n^{-n/2} |A|^n \,,
        \\
        \label{eq:hadamard:smallest}
        |A(v)| \ge |\J (A)| \, |A|^{1-n} |v|
        \quad \text{for $v \in \R^n$, provided $A \ne 0$} \,.
    \end{gather}
\end{lemma}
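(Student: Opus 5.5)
We reduce both inequalities to statements about the singular values of~$A$. By the singular value decomposition there are orthonormal bases $v_1, \ldots, v_n$ of~$\R^n$ and orthonormal vectors $w_1, \ldots, w_n$ in~$\R^{n+1}$ with $A v_i = s_i w_i$, where $s_1 \ge \cdots \ge s_n \ge 0$. Then $|A|^2 = \textsum{i=1}{n} s_i^2$. Since the Jacobian may be computed in any orthonormal basis (see~\ref{mr:notation}), we get $|\J (A)| = |\vwedge n A| = |s_1 w_1 \wedge \cdots \wedge s_n w_n| = s_1 \cdots s_n$, because $w_1 \wedge \cdots \wedge w_n$ has norm one.

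For~\eqref{eq:hadamard:amgm}, apply the arithmetic--geometric mean inequality to $s_1^2, \ldots, s_n^2$:
\begin{displaymath}
    |\J (A)|^2 = \textstyle\prod_{i=1}^n s_i^2 \le \bigl( n^{-1} \textsum{i=1}{n} s_i^2 \bigr)^n = n^{-n} |A|^{2n} \,.
\end{displaymath}
Taking square roots gives the claim.

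For~\eqref{eq:hadamard:smallest}, first note that $|A(v)| \ge s_n |v|$ for all $v \in \R^n$. Indeed, $|A v|^2 = \textsum{i=1}{n} s_i^2 (v \bullet v_i)^2 \ge s_n^2 |v|^2$. It then suffices to show $s_n \ge s_1 \cdots s_n \, |A|^{1-n}$, assuming $A \ne 0$. If $s_n = 0$, the right side vanishes and there is nothing to prove. Otherwise divide by~$s_n$; it remains to check $s_1 \cdots s_{n-1} \le |A|^{n-1}$. This holds because each $s_i \le \|A\| \le |A|$.

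The only point needing care is the identification $|\vwedge n A| = s_1 \cdots s_n$. This follows from the orthonormal-basis invariance recorded in~\ref{mr:notation}, or alternatively from $|\vwedge n A|^2 = \det(A^* \circ A) = \textstyle\prod_i s_i^2$. Everything else is elementary, so I expect no real obstacle.
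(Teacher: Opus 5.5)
Your proof is correct, and it takes a slightly different route from the paper's.

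\textbf{The paper's route.} It takes an arbitrary orthonormal basis $v_1, \ldots, v_n$ of $\R^n$ and uses Hadamard's inequality $|A v_1 \wedge \cdots \wedge A v_n| \le |A v_1| \cdots |A v_n|$. It then applies the arithmetic--geometric mean inequality to $|A v_1|^2, \ldots, |A v_n|^2$, whose sum is $|A|^2$. For~\eqref{eq:hadamard:smallest} it chooses the basis with $v_1 = v/|v|$ and bounds the remaining factors by $\|A\| \le |A|$. This gives $|\J(A)| \le |A(v)| \, |A|^{n-1}$ in one line.

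\textbf{Your route.} You diagonalise $A$ by the singular value decomposition. Hadamard's inequality then becomes the identity $|\J(A)| = s_1 \cdots s_n$. You reach~\eqref{eq:hadamard:smallest} through the smallest singular value: first $|A(v)| \ge s_n |v|$, then $s_n \ge s_1 \cdots s_n \, |A|^{1-n}$.

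\textbf{Comparison.} The paper's argument avoids the spectral theorem and any separate lower bound by $s_n$; putting $v$ into the basis does all the work. Yours needs the SVD, including completing the $w_i$ to an orthonormal system when some $s_i = 0$, which is harmless here. In return it gives slightly more information: an exact formula for the Jacobian, and the fact that $|\J(A)| \, |A|^{1-n}$ is a lower bound for the smallest singular value of~$A$. That is precisely how the quantity $s$ is used later in the proof of~\ref{lem:K}.
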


\begin{proof}
    Let $v_1, \ldots, v_n$ be an orthonormal basis of $\R^n$. Hadamard's
    inequality gives $|\J (A)| = |A v_1 \wedge \cdots \wedge A
    v_n| \le |A v_1| \cdots |A v_n|$. Since $|A v_1|^2 + \cdots + |A v_n|^2
    = |A|^2$, the inequality between the geometric and the arithmetic mean
    yields~\eqref{eq:hadamard:amgm}. Suppose $A \ne 0$ and $|v| = 1$. We
    choose $v_1 = v$ and use $|A v_i| \le \|A\| \le |A|$ for $i \ge 2$ to
    obtain $|\J (A)| \le |A(v)| \, |A|^{n-1}$. This
    is~\eqref{eq:hadamard:smallest}.
\end{proof}

\begin{miniremark}
    \label{mr:distortion}
    Let $A \in \Hom(\R^n,\R^{n+1})$ with $\vwedge n A \ne 0$. We call
    $|A|^n / |\J (A)|$ the \emph{distortion} of~$A$. By the proof
    of~\ref{lem:hadamard}, it is at least $n^{n/2}$, with equality if and
    only if $A^* \circ A$ is a multiple of the identity. In this case
    $Y(A) = n^{-1} \eqproject{\im A}$.
\end{miniremark}

\begin{definition}
    \label{def:K}
    For $n^{n/2} \le \kappa < \infty$ let $\Psi_\kappa$ be the closure
    in $\Hom(\R^{n+1},\R^{n+1})$ of the convex hull of
    \begin{displaymath}
        \bigl\{ Y(A) : A \in \Hom(\R^n,\R^{n+1}),\
        0 < |A|^n \le \kappa |\J (A)| \bigr\} \,.
    \end{displaymath}
    Then $Y(A) \in \Psi_\kappa$ whenever $A \in \Hom(\R^n,\R^{n+1})$ and
    $0 < |A|^n \le \kappa |\J (A)|$.
\end{definition}

\begin{lemma}
    \label{lem:K}
    If $n^{n/2} \le \kappa < \infty$, then $\Psi_\kappa$ is a non-empty
    compact convex set, and each $X \in \Psi_\kappa$ is symmetric and
    satisfies
    \begin{equation}
        \label{eq:K}
        Xw \bullet w \ge 0 \text{ for $w \in \R^{n+1}$} \,, \quad
        \trace X = 1 \,, \quad
        \sigma_1(X) \le 1 \,, \quad
        \sigma_n(X) \ge (2 \kappa^2)^{-1} \,.
    \end{equation}
\end{lemma}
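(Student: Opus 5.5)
The approach is to verify each property in~\eqref{eq:K} for the generators $Y(A)$ and then pass to convex combinations and closures. The key observation is that every condition except the last one is convex and closed. For the last one, I will replace it by a condition that is convex and closed, namely a uniform lower bound on $Xw \bullet w$ for $w \in \im A$. That bound is not uniform in the subspace, so this step needs separate care.

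Non-emptiness is immediate: a conformal $A$, for instance $A(v) = v$ viewed in $\R^{n+1}$, has distortion $n^{n/2} \le \kappa$ by~\ref{mr:distortion}. For a generator $Y(A) = |A|^{-2} A \circ A^*$ the following hold.
\begin{itemize}
    \item $Y(A)$ is symmetric.
    \item $Y(A) w \bullet w = |A|^{-2} |A^* w|^2 \ge 0$.
    \item $\trace Y(A) = |A|^{-2} \trace (A \circ A^*) = 1$.
    \item $\|Y(A)\| = |A|^{-2}\|A\|^2 \le 1$.
\end{itemize}
Each of these properties defines a closed convex set; for $\sigma_1 = \|\cdot\|$ this holds because the operator norm is convex. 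Hence they pass to $\Psi_\kappa$. Compactness follows since $\Psi_\kappa$ is closed and bounded, because $|X| \le \sqrt{n+1}\,\|X\|$.

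The main obstacle is the bound $\sigma_n(X) \ge (2\kappa^2)^{-1}$, since $\sigma_n$ is not concave in general. For symmetric non-negative $X$, Courant--Fischer gives
\[
\sigma_n(X) = \sup_{W \in \grass{n+1}{n}} \inf \{ Xw \bullet w : w \in W,\ |w| = 1 \}.
\]
The infimum over a fixed $W$ is concave in $X$, but the supremum over $W$ destroys concavity. In terms of $Y = X$ the problem is that different generators have different images. I therefore use the complementary formula: $\sigma_n(X) = \lambda_n$, and $\lambda_n + \lambda_{n+1}$ is the minimum of $\trace(\project{P} X)$ over $2$-planes $P$, which is concave. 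Since $\lambda_{n+1} \le \lambda_n$, it suffices to show
\[
\inf \{ \trace(\project{P}\, Y(A)) : P \in \grass{n+1}{2} \} \ge \kappa^{-2}
\]
for each generator; then $2\lambda_n \ge \lambda_n + \lambda_{n+1} \ge \kappa^{-2}$, which is exactly the constant $(2\kappa^2)^{-1}$. This lower bound is preserved under convex combinations and limits.

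It remains to bound the trace for a generator. Any $2$-plane $P$ meets the $n$-dimensional space $\im A$ in a line spanned by a unit vector $w$; if $\dim \im A < n$ then $\J(A) = 0$, which is excluded. Then
\[
\trace(\project{P} Y(A)) \ge Y(A) w \bullet w = |A|^{-2} |A^* w|^2 .
\]
For $w \in \im A$, the map $A^*$ restricted to $\im A$ has smallest singular value equal to the smallest singular value of $A$. By~\eqref{eq:hadamard:smallest} this is at least $|\J(A)|\,|A|^{1-n} \ge \kappa^{-1}|A|$. Hence $|A|^{-2}|A^* w|^2 \ge \kappa^{-2}$, which concludes the proof.
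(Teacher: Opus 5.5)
Your proof is correct. The bound on $\sigma_n$ takes a genuinely different route from the paper's; the other properties are handled essentially as in the paper (your convexity of the operator-norm ball in place of $|X| \le \trace X$ is an equally valid variant).

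\textbf{The paper's route.} It carries an auxiliary admissible map $N$ (symmetric, non-negative, trace one), starting from $N = \eqproject{\R \nu}$ with $\nu \perp \im A$ for a generator. It proves the condition $Xw \bullet w \ge \kappa^{-2}(|w|^2 - Nw \bullet w)$, which is affine in $(X,N)$ jointly. This condition passes to convex combinations by averaging the $N_i$ with the same weights. It passes to limits by extracting a convergent subsequence of the bounded admissible maps. Finally the paper exhibits the $n$-plane $W = e^\perp$, with $e$ a top eigenvector of $N$, on which $Nw \bullet w \le 1/2$ because the second eigenvalue of $N$ is at most $1/2$.

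\textbf{Your route.} You use that $X \mapsto \lambda_n(X) + \lambda_{n+1}(X) = \min \{ \project{P} \bullet X : P \in \grass{n+1}{2} \}$ is concave (Ky Fan's minimum principle). So $\Psi_\kappa$ lies in the intersection of the closed half-spaces $\{X : \project{P} \bullet X \ge \kappa^{-2}\}$ as soon as every generator does. For a generator you use that every $2$-plane meets the hyperplane $\im A$ nontrivially.

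\textbf{Comparison.} Your approach removes the auxiliary $N$ and the compactness step, and gives a clean closed convex superset of $\Psi_\kappa$ with the same constant $(2\kappa^2)^{-1}$. The paper's condition implies your inequality as well, since it gives $\lambda_n(X) + \lambda_{n+1}(X) \ge \kappa^{-2}(2 - \lambda_1(N) - \lambda_2(N)) \ge \kappa^{-2}$. The price is a citation of Ky Fan's principle. The paper uses only the sup--inf definition of $\sigma_n$ from~\ref{mr:notation} and an explicit eigenvector.

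\textbf{Small points to tidy.}
\begin{itemize}
\item State explicitly that $\Psi_\kappa$ is convex, as the closure of a convex set.
\item A $2$-plane $P$ may lie inside $\im A$, so say that $P \cap \im A$ contains a unit vector $w$ rather than that it is a line.
\item Justify $\project{P} \bullet Y(A) = Y(A)w \bullet w + Y(A)w' \bullet w' \ge Y(A)w \bullet w$, where $w, w'$ is an orthonormal basis of $P$.
\end{itemize}
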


\begin{proof}
    Call $N \in \Hom(\R^{n+1},\R^{n+1})$ \emph{admissible} if $N$ is
    symmetric, $Nw \bullet w \ge 0$ for $w \in \R^{n+1}$, and $\trace N = 1$.
    Convex combinations and limits of admissible maps are admissible, and $|N|
    \le \trace N = 1$ for admissible $N$. For a symmetric $X \in
    \Hom(\R^{n+1},\R^{n+1})$ and an admissible $N$ consider the condition
    \begin{equation}
        \label{eq:K:S}
        Xw \bullet w \ge \kappa^{-2} \bigl( |w|^2 - Nw \bullet w \bigr)
        \quad \text{for $w \in \R^{n+1}$} \,.
    \end{equation}

    Suppose $A \in \Hom(\R^n,\R^{n+1})$ and
    $0 < |A|^n \le \kappa |\J (A)|$. Clearly $\dim \im A = n$. Let
    \begin{displaymath}
        \nu \in (\im A)^{\perp} \,,
        \quad
        |\nu| = 1 \,,
        \quad
        P = \eqproject{\im A} \,,
        \quad \text{and} \quad
        s = |\J (A)| \, |A|^{1-n} \,.
    \end{displaymath}
    If $z \in \im A$, then $z = A(v)$ for some $v \in \R^n$, $|z| \ge s|v|$
    by~\ref{lem:hadamard}\eqref{eq:hadamard:smallest}, and
    $|A^*(z)| \, |v| \ge A^*(z) \bullet v = |z|^2$; hence $|A^*(z)| \ge s
    |z|$. Since $A^*(\nu) = 0$, $s |A|^{-1} \ge \kappa^{-1}$, and
    $|P(w)|^2 = |w|^2 - (w \bullet \nu)^2$, we infer
    \begin{displaymath}
        Y(A)w \bullet w
        = |A|^{-2} |A^*(P(w))|^2
        \ge \kappa^{-2} \bigl( |w|^2 - (w \bullet \nu)^2 \bigr)
        \quad \text{for $w \in \R^{n+1}$} \,.
    \end{displaymath}
    Since $\eqproject{\R \nu}$ is admissible and $\eqproject{\R \nu}(w)
    \bullet w = (w \bullet \nu)^2$, the condition~\eqref{eq:K:S} holds for $X
    = Y(A)$ and $N = \eqproject{\R \nu}$. Moreover $Y(A)$ is symmetric,
    $Y(A)w \bullet w \ge 0$ for $w \in \R^{n+1}$, and $\trace Y(A) = |A|^{-2}
    \trace (A \circ A^*) = 1$.

    Let $X$ be a convex combination of finitely many such maps $Y(A_i)$ with
    weights $t_i$, and let $N$ be the convex combination of the
    corresponding projections $N_i = \eqproject{\R \nu_i}$ with the same
    weights. Then $N$ is admissible, $X$ is symmetric, $Xw \bullet w \ge
    0$ for $w \in \R^{n+1}$, $\trace X = 1$, and~\eqref{eq:K:S} holds. If $X
    \in \Psi_\kappa$, then $X$ is the limit of a sequence of such convex
    combinations, with corresponding admissible maps. These maps are
    bounded, and a subsequence converges to some admissible $N$. Hence $X$
    is symmetric, $Xw \bullet w \ge 0$ for $w \in \R^{n+1}$, $\trace X =
    1$, and~\eqref{eq:K:S} holds. The set $\Psi_\kappa$ is bounded, since
    $|X| \le \trace X$ for symmetric positive-semidefinite~$X$, closed by definition,
    and convex, as the closure of a convex set. The inclusion $A$ of $\R^n$
    into $\R^{n+1}$ satisfies $|A|^n = n^{n/2} = n^{n/2} |\J (A)|$;
    hence $Y(A) \in \Psi_\kappa \ne \varnothing$.

    Let $X \in \Psi_\kappa$ and let $N$ be an admissible map as
    in~\eqref{eq:K:S}. The largest eigenvalue of $X$ is at most $\trace X = 1$,
    so $\sigma_1(X) \le 1$. Let $e$ be a unit eigenvector of $N$ for its largest
    eigenvalue $\lambda_1$, and let $W = \{ w : w \bullet e = 0 \}$.  Then
    $N[W] \subseteq W$, and every eigenvalue of $N|W$ is at most
    $\min \{ \lambda_1, 1 - \lambda_1 \} \le 1/2$, because the eigenvalues of
    $N$ are non-negative with sum~$1$. Hence $Nw \bullet w \le 1/2$ and
    $|Xw| \ge Xw \bullet w \ge (2 \kappa^2)^{-1}$ for $w \in W$ with $|w| = 1$.
    Since $W \in \grass{n+1}{n}$, we obtain $\sigma_n(X) \ge (2 \kappa^2)^{-1}$.
\end{proof}

\section{Push-forward functionals of a solution}
\label{sec:stress}

\begin{miniremark}
    \label{mr:stress}
    In this section we push forward $|\uD u|^{n-2} \uD u \circ \uD u^*$ by~$u$ to define a linear
    functional on continuous maps with compact support in the target space~$\R^{n+1}$
    (a.k.a. vector-valued measure; see~\ref{mr:DGPR:language}). We~compute this functional on the
    derivatives of maps whose support does not meet the image of the boundary sphere, and we
    represent the functional by integration when the distortion is bounded. Throughout the section,
    $0 \le h < \infty$, and $H$ and $u$ are as in~\ref{def:hsys}. We fix Borel functions
    $u : U \to \R^{n+1}$ and $\uD u : U \to \Hom(\R^n,\R^{n+1})$ representing $u$ and its weak
    derivative.
\end{miniremark}

\begin{miniremark}
    \label{mr:push}
    Suppose $d \in \natp$ and $W$ is a finite dimensional Euclidean space. As
    in~\cite[2.5.14]{Fed69}, $\ccspace{\R^d}$ is the space of continuous
    functions $f : \R^d \to \R$ with compact support, and
    $\ccspace{\R^d}^+ = \ccspace{\R^d} \cap \{ f : f \ge 0 \}$. Similarly,
    $\ccspace{\R^d,W}$ is the space of continuous maps of $\R^d$ into $W$ with
    compact support. Suppose $T : \ccspace{\R^d,W} \to \R$ is a linear
    functional such that
    \begin{displaymath}
        \|T\|(f) = \sup \bigl\{ T(\xi) : \xi \in \ccspace{\R^d,W},\
        |\xi(y)| \le f(y) \text{ for $y \in \R^d$} \bigr\} < \infty
        \quad \text{for $f \in \ccspace{\R^d}^+$} \,.
    \end{displaymath}
    We apply~\cite[2.5.12]{Fed69} in the form for $L = \ccspace{X}$ given in
    the last paragraph of~\cite[2.5.14]{Fed69}, with $\R^d$,
    $\ccspace{\R^d,W}$, $W$ and the norm of $W$ in place of $X$, $\Omega$,
    $E$ and~$\nu$, as in~\cite[4.1.5]{Fed69}. Hence $\|T\|$ is a monotone
    Daniell integral on $\ccspace{\R^d}^+$. As in~\cite[4.1.5]{Fed69}, we
    denote by $\|T\|$ also the Radon measure over $\R^d$ associated with it,
    and call it the \emph{variation measure} of~$T$. Moreover, there is an
    $\|T\|$ measurable map $\eta$ with values in $\Hom(W,\R)$ such that
    $\|\eta(y)\| = 1$ for $\|T\|$ almost all $y$ and
    \begin{equation}
        \label{eq:stress:polar}
        T(\xi) = \textint{}{}\eta(y)(\xi(y)) \ud \|T\| y
        \quad \text{for $\xi \in \ccspace{\R^d,W}$} \,.
    \end{equation}
    Since $\|T\|(f) = \textint{}{}f \ud \|T\|$ for $f \in \ccspace{\R^d}^+$, we
    have $T(\xi) \le \textint{}{}|\xi| \ud \|T\|$ for
    $\xi \in \ccspace{\R^d,W}$.  The functions $f_j \in \ccspace{\R^d}^+$ given
    by $f_j(y) = \min \{ 1, \max \{ 0, j - |y| \} \}$ for $y \in \R^d$ and
    $j \in \natp$ satisfy $f_j \le f_{j+1} \le 1$ and $f_j(y) = 1$ whenever
    $|y| \le j-1$.  By~\cite[2.4.7]{Fed69},
    $\|T\|(\R^d) = \lim_{j \to \infty} \|T\|(f_j)$.  Since $f_j \le 1$, we infer
    \begin{equation}
        \label{eq:stress:variation}
        \begin{gathered}
            |T(\xi)| \le \textint{}{}|\xi| \ud \|T\|
            \quad \text{for $\xi \in \ccspace{\R^d,W}$} \,,
            \\
            \|T\|(\R^d) = \sup \bigl\{ T(\xi) : \xi \in \ccspace{\R^d,W},\
            |\xi(y)| \le 1 \text{ for $y \in \R^d$} \bigr\} \,.
        \end{gathered}
    \end{equation}
\end{miniremark}

\begin{definition}
    \label{def:push}
    For $0 < \rho < 1$, $f \in \ccspace{\R^{n+1}}$,
    $F \in \ccspace{\R^{n+1},\Hom(\R^{n+1},\R^{n+1})}$, and
    $g \in \ccspace{\R^{n+1},\R^{n+1}}$ we define
    \begin{gather*}
        \mu_{\rho}(f) = \textint{\oball 0\rho}{} f(u(x)) \, |\uD u(x)|^n \ud \LM^n x
        \,,
        \\
        \Sigma_\rho(F) = \textint{\oball 0\rho}{} F(u(x)) \bullet
        \bigl( |\uD u|^{n-2} \uD u \circ \uD u^* \bigr)(x) \ud \LM^n x \,,
        \\
        \tau_\rho(g) = \textint{\oball 0\rho}{}
        H(u(x)) \, \J u(x) \bullet g(u(x)) \ud \LM^n x \,.
    \end{gather*}
\end{definition}

\begin{miniremark}
    \label{mr:push:measure}
    The linear functionals in~\ref{def:push} do not depend on the choice of the
    representatives. Since $u$ and $\uD u$ are Borel maps, the integrands
    in~\ref{def:push} are Borel functions. Since $\mu_{\rho}(f) \ge 0$ for
    $f \in \ccspace{\R^{n+1}}^+$, the functional $\mu_{\rho}$ is monotone,
    and~\cite[2.5.13]{Fed69} shows that it is a Daniell integral. Thus
    $\mu_{\rho}$ is a monotone Daniell integral in the sense
    of~\cite[2.5.6]{Fed69}. We denote with the same symbol $\mu_{\rho}$ also the
    $\ccspace{\R^{n+1}}$ regular measure associated with it. Since the topology
    of $\R^{n+1}$ has a countable base, it is a Radon measure
    by~\cite[2.5.14]{Fed69}, and $\mu_{\rho}(f) = \textint{}{}f \ud \mu_{\rho}$
    for $f \in \ccspace{\R^{n+1}}$.

    Suppose $V \subseteq \R^{n+1}$ is open, let $f_j$ be as in~\ref{mr:push}
    with $d = n+1$, and let $\dist(y,\varnothing) = \infty$ for $y \in \R^{n+1}$. The
    functions $\varphi_j : \R^{n+1} \to \R$ given by
    \begin{displaymath}
        \varphi_j(y) = \min \bigl\{ f_j(y), j \dist(y, \R^{n+1} \without V)
        \bigr\}
        \quad \text{for $y \in \R^{n+1}$ and $j \in \natp$}
    \end{displaymath}
    belong to $\ccspace{\R^{n+1}}^+$ and increase to the characteristic
    function of $V$ as $j \to \infty$. We replace $f$ by $\varphi_j$ in the
    identity
    \begin{displaymath}
        \textint{}{}f \ud \mu_{\rho} = \textint{\oball 0\rho}{} f(u(x)) \, |\uD u(x)|^n \ud
        \LM^n x
    \end{displaymath}
    and apply~\cite[2.4.7]{Fed69} to both sides. This gives
    \begin{equation}
        \label{eq:stress:borel}
        \mu_{\rho}(V) = \textint{\oball 0\rho \cap u^{-1}[V]}{} |\uD u|^n \ud \LM^n \,. 
    \end{equation}
    Both sides of~\eqref{eq:stress:borel} are finite and
    additive on Borel sets; hence~\eqref{eq:stress:borel} also holds for
    closed sets~$E$ in place of~$V$. Suppose $E \subseteq \R^{n+1}$ is a Borel set.
    By~\cite[2.2.2(1)]{Fed69}, $\mu_{\rho}(E)$ is the supremum of the numbers
    $\mu_{\rho}(C)$ corresponding to closed sets $C \subseteq E$. Hence
    $\mu_{\rho}(E)$ does not exceed the right-hand side
    of~\eqref{eq:stress:borel}. The same holds for $\R^{n+1} \without E$ in
    place of~$E$, and~\eqref{eq:stress:borel} with $E = \R^{n+1}$
    yields~\eqref{eq:stress:borel} for every Borel set $E \subseteq
    \R^{n+1}$.

    Since $|A \circ A^*| \le \|A\| |A| \le |A|^2$ for linear maps~$A$
    and~\ref{lem:hadamard}\eqref{eq:hadamard:amgm} holds, we have
    \begin{equation}
        \label{eq:stress:bounds}
        |\Sigma_\rho(F)| \le \textint{}{}|F| \ud \mu_{\rho} \,, \qquad
        |\tau_\rho(g)| \le h n^{-n/2} \textint{}{}|g| \ud \mu_{\rho}
    \end{equation}
    for $F \in \ccspace{\R^{n+1},\Hom(\R^{n+1},\R^{n+1})}$ and $g \in
    \ccspace{\R^{n+1},\R^{n+1}}$.
\end{miniremark}

\begin{miniremark}
    \label{mr:sphere:strategy}
    The proof of~\ref{lem:sphere} follows that of~\cite[Theorem~4.2]{Men16}. We test the
    system with $\omega(|x|) \, g(u(x))$, where $\omega$ is a primitive of a
    test function $\varrho$ of the radius. Fubini's theorem then
    gives~\ref{lem:sphere}\eqref{eq:sphere:test}, and integration in polar
    coordinates turns it into an identity for almost every radius. The
    continuity of the traces of $u$ on almost all spheres follows
    from~\cite[Chap.~2, Sec.~4.3, Lemma~1]{GMS98}.
\end{miniremark}

\begin{lemma}
    \label{lem:sphere}
    Suppose $J = \{ s : 0 < s < 1 \}$. Then
    \begin{equation}
        \label{eq:sphere:test}
        \textint{J}{} \varrho(s) \bigl( \Sigma_s(\uD g) - \tau_s(g) \bigr) \ud \LM^1 s
        = \textint{U}{} \varrho(|x|) \, |\uD u(x)|^{n-2} \, g(u(x)) \bullet \uD
        u(x)(|x|^{-1} x) \ud \LM^n x
    \end{equation}
    whenever $\varrho \in \dspace J\R$ and $g \in
    \cnt{1}_c(\R^{n+1},\R^{n+1})$. Moreover, for $\LM^1$ almost all $\rho \in
    J$ there is a continuous map $v : \Bdry{\oball 0\rho} \to
    \R^{n+1}$ such that
    \begin{gather}
        u(x) = v(x) \quad \text{for $\HM^{n-1}$ almost all $x \in \Bdry{\oball 0\rho}$} \,,
        \quad
        \textint{\Bdry{\oball 0\rho}}{} |\uD u|^n \ud \HM^{n-1} < \infty \,,
        \\ \text{and} \quad 
        \label{eq:sphere}
        \Sigma_\rho(\uD g) = \tau_\rho(g) + \beta_\rho(g)
    \end{gather}
    whenever $g \in \cnt{1}_c(\R^{n+1},\R^{n+1})$. Here $\beta_\rho$ is the
    linear functional on $\ccspace{\R^{n+1},\R^{n+1}}$ given by
    \begin{displaymath}
        \beta_\rho(g)
        = \textint{\Bdry{\oball 0\rho}}{}
        |\uD u(x)|^{n-2} \, g(v(x)) \bullet \uD u(x)(|x|^{-1} x) \ud
        \HM^{n-1} x \,.
    \end{displaymath}
\end{lemma}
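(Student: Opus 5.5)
We test the weak formulation~\eqref{eq:hsys} with $\psi(x) = \omega(|x|) \, g(u(x))$, where $\omega(t) = -\int_t^1 \varrho(s) \ud \LM^1 s$ for $0 \le t \le 1$. Thus $\omega$ vanishes near $1$, is constant near $0$, and satisfies $\omega' = \varrho$. Since $g$ is of class~$1$ with compact support and $u \in W^{1,n}$, the chain rule shows that $\psi \in W^{1,n}_0(U,\R^{n+1}) \cap \Lp{\infty}$. Its derivative is
\begin{displaymath}
    \uD \psi(x) = \omega(|x|) \, \uD g(u(x)) \circ \uD u(x) + \varrho(|x|) \, |x|^{-1} x^* \cdot g(u(x)) \,,
\end{displaymath}
where $x^*$ denotes the functional $w \mapsto x \bullet w$. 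We then rewrite both terms of the left-hand side of~\eqref{eq:hsys}.

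For the first term we use the identity $B \bullet (C \circ A) = C \bullet (B \circ A^*)$. This gives
\begin{displaymath}
    |\uD u|^{n-2} \uD u \bullet \bigl( \uD g(u) \circ \uD u \bigr) = \uD g(u) \bullet \bigl( |\uD u|^{n-2} \uD u \circ \uD u^* \bigr) \,.
\end{displaymath}
For the second term, \eqref{eq:notation:rank-one} gives
\begin{displaymath}
    |\uD u|^{n-2} \uD u \bullet \bigl( |x|^{-1} x^* \cdot g(u) \bigr) = |\uD u|^{n-2} \, g(u) \bullet \uD u(|x|^{-1}x) \,.
\end{displaymath}
On the right-hand side of~\eqref{eq:hsys} the integrand is $\omega(|x|) \, H(u) \, \J u \bullet g(u)$. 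Next we write $-\omega(|x|) = \int_J \varrho(s) \, \mathbf{1}[\,|x| < s\,] \ud s$ and apply Fubini; this is justified because all integrands are dominated by $C |\uD u|^n$, using~\eqref{eq:stress:bounds}. Hence $\int_U \omega(|x|) \, \Phi(x) \ud x = -\int_J \varrho(s) \int_{\oball 0s} \Phi \ud s$, which turns the $\omega$-terms into $-\int_J \varrho(s) \bigl( \Sigma_s(\uD g) - \tau_s(g) \bigr) \ud s$. Rearranging yields~\eqref{eq:sphere:test}.

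For the pointwise statement, we use polar coordinates to write the right-hand side of~\eqref{eq:sphere:test} as $\int_J \varrho(s) \, b_s(g) \ud s$, where $b_s(g) = \int_{\Bdry \oball 0s} |\uD u|^{n-2} \, g(u) \bullet \uD u(|x|^{-1}x) \ud \HM^{n-1}$. This quantity is defined for almost every $s$ and is $\LM^1$ integrable in $s$. Since $\varrho \in \dspace J\R$ is arbitrary, for each fixed $g$ we get $\Sigma_s(\uD g) - \tau_s(g) = b_s(g)$ for almost every $s$. The main obstacle is obtaining a single null set that works for all $g$ simultaneously. For this we take the countable family $g_1, g_2, \ldots$ from~\ref{mr:separable} and discard the union of the corresponding null sets. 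We also discard the null set of radii where $\int_{\Bdry\oball 0\rho} |\uD u|^n \ud \HM^{n-1} = \infty$; this set is null by Fubini, since $|\uD u|^n$ is integrable. Finally we discard the null set where the trace of $u$ fails to agree $\HM^{n-1}$ almost everywhere with a continuous $v$, invoking~\cite[Chap.~2, Sec.~4.3, Lemma~1]{GMS98}; this is where the restriction $W^{1,n}$ on $(n-1)$-spheres embeds into continuous functions. At each remaining $\rho$ we have $b_\rho = \beta_\rho$ on the countable family.

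It remains to pass to a general $g$, which is the limit of a subsequence $g_{j_k}$ in the $\cnt{1}$ norm. Then $\Sigma_\rho(\uD g_{j_k}) \to \Sigma_\rho(\uD g)$ and $\tau_\rho(g_{j_k}) \to \tau_\rho(g)$ by~\eqref{eq:stress:bounds}, because $\mu_\rho$ is finite. Likewise $\beta_\rho(g_{j_k}) \to \beta_\rho(g)$, since $|\beta_\rho(g)| \le \sup|g| \int_{\Bdry\oball 0\rho} |\uD u|^{n-1} \ud \HM^{n-1} < \infty$ by Hölder's inequality. This proves~\eqref{eq:sphere} for every $g$ at almost every $\rho$.
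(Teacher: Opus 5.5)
Your proposal is correct and follows the paper's proof essentially step by step. It uses the same test map $\omega(|x|)\,g(u(x))$ (you take the opposite sign convention for $\omega$), Fubini's theorem, polar coordinates, the countable dense family from~\ref{mr:separable}, and the continuity bounds for $\Sigma_\rho$, $\tau_\rho$, $\beta_\rho$ to pass to general $g$. The paper is more explicit only in the trace step. There it applies \cite[Chap.~2, Sec.~4.3, Lemma~1]{GMS98} to the approximately continuous representative $\tilde u$, and then combines $u = \tilde u$ $\LM^n$ almost everywhere with polar coordinates to get $u = \tilde u$ $\HM^{n-1}$ almost everywhere on almost every sphere; you compress this into one phrase. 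The paper also uses regularisation via \cite[Lemma~7.2]{GT01} in place of your direct appeal to the fundamental lemma, which amounts to the same argument.
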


\begin{proof}
    Suppose $\varrho \in \dspace J\R$ and $g \in
    \cnt{1}_c(\R^{n+1},\R^{n+1})$. Define $\omega : \R \to \R$ by
    \begin{displaymath}
        \omega(t) = \textint{\sup \{ t, 0 \}}{1} \varrho \ud \LM^1
        \quad \text{for $t \in \R$} \,.
    \end{displaymath}
    Then $\omega$ is of class~$\infty$, $\omega'(t) = - \varrho(t)$ for
    $t \in J$, $\omega(t) = 0$ for $t \ge \sup \spt \varrho$, and $\omega$ is
    constant on $\{ t : t \le \inf \spt \varrho \}$. Hence the function mapping
    $x \in U$ onto $\omega(|x|)$ is of class~$\infty$ with compact support
    in~$U$, and its gradient at $x$ equals $- \varrho(|x|) |x|^{-1} x$ if
    $x \ne 0$ and $0$ if $x = 0$. One readily verifies that
    $g \circ u \in W^{1,n}(U,\R^{n+1})$ and
    $\uD(g \circ u) = \uD g(u) \circ \uD u$; cf.~\cite[Lemma~7.5]{GT01}.  Therefore
    the map $\psi$ given by $\psi(x) = \omega(|x|) \, g(u(x))$ for $x \in U$
    belongs to $W^{1,n}_0(U,\R^{n+1}) \cap \Lp{\infty}(U,\R^{n+1})$.  Since
    $\uD u \circ \uD u^*$ is symmetric,
    \begin{displaymath}
        \uD u \bullet \bigl( \uD g(u) \circ \uD u \bigr)
        = \trace \bigl( \uD g(u) \circ \uD u \circ \uD u^* \bigr)
        = \uD g(u) \bullet \bigl( \uD u \circ \uD u^* \bigr) \,.
    \end{displaymath}
    Testing~\ref{def:hsys}\eqref{eq:hsys} with $\psi$ and
    using~\ref{mr:notation}\eqref{eq:notation:rank-one}, we obtain
    \begin{multline*}
        \textint{U}{} \omega(|x|) \, \uD g(u(x)) \bullet \bigl( |\uD u|^{n-2} \uD u
        \circ \uD u^* \bigr)(x) \ud \LM^n x
        \\
        - \textint{U}{} \varrho(|x|) \, |\uD u(x)|^{n-2} \, g(u(x)) \bullet \uD
        u(x)(|x|^{-1} x) \ud \LM^n x
        \\
        = \textint{U}{} \omega(|x|) \, H(u(x)) \, \J u(x) \bullet g(u(x))
        \ud \LM^n x \,.
    \end{multline*}
    Since $\omega(|x|) = \textint{J \cap \{ s : s > |x| \}}{} \varrho \ud \LM^1$
    for $x \in U$, Fubini's theorem~\cite[2.6.2]{Fed69} transforms the first
    and the last integral into $\textint{J}{} \varrho(s) \, \Sigma_s(\uD g) \ud \LM^1
    s$ and $\textint{J}{} \varrho(s) \, \tau_s(g) \ud \LM^1 s$. This
    proves~\eqref{eq:sphere:test}.

    If $g \in \cnt{1}_c(\R^{n+1},\R^{n+1})$, we define
    \begin{displaymath}
        f_g(t) = \textint{\Bdry{\oball 0t}}{} |\uD u(x)|^{n-2} \, g(u(x)) \bullet
        \uD u(x)(|x|^{-1} x) \ud \HM^{n-1} x
    \end{displaymath}
    whenever $t \in J$ and $\textint{\Bdry{\oball 0t}}{} |\uD u|^{n-1} \ud
    \HM^{n-1} < \infty$. Since $U$ is bounded, $|\uD u|^{n-1}$ is $\LM^n
    \restrict U$ summable. By the integration in polar
    coordinates~\cite[3.2.13]{Fed69}, $f_g$ is $\LM^1 \restrict J$ summable,
    and the right-hand side of~\eqref{eq:sphere:test} equals $\textint{J}{} \varrho
    f_g \ud \LM^1$. The function mapping $t \in J$ onto $\Sigma_t(\uD g) -
    \tau_t(g)$ is bounded and continuous. Hence~\eqref{eq:sphere:test}
    shows that the regularisations, defined in~\cite[(7.13)]{GT01}, of the
    function $[ J \ni t \mapsto \Sigma_t(\uD g) - \tau_t(g) - f_g(t) ]$
    vanish on each compact subset of $J$ for small radii. Therefore
    \cite[Lemma~7.2]{GT01} yields
    \begin{equation}
        \label{eq:sphere:ae}
        \Sigma_t(\uD g) = \tau_t(g) + f_g(t)
        \quad \text{for $\LM^1$ almost all $t \in J$} \,.
    \end{equation}

    Let $\tilde u$ be the approximately continuous representative of~$u$:
    if $x \in U$ and $u$ has an approximate limit at~$x$, see~\cite[2.9.12]{Fed69},
    then $\tilde u(x)$ is this limit. By~\cite[Chap.~2, Sec.~4.3, Lemma~1]{GMS98} with $p = n$, applied
    to the maps sending $x \in U$ onto $(u(x) \bullet e_i) e_1 \in \R^n$ for
    $i \in \{1, \ldots, n+1\}$, the map $\tilde u$ is defined and continuous
    on $\Bdry{\oball 0t}$ for $\LM^1$ almost all $t \in J$. Moreover $u(x) =
    \tilde u(x)$ for $\LM^n$ almost all $x \in U$ by~\cite[2.9.13]{Fed69}.
    Hence the integration in polar coordinates~\cite[3.2.13]{Fed69} shows
    that $u = \tilde u$ at $\HM^{n-1}$ almost all points of $\Bdry{\oball
    0t}$ and $\textint{\Bdry{\oball 0t}}{} |\uD u|^n \ud \HM^{n-1} < \infty$ for
    $\LM^1$ almost all $t \in J$.

    By~\ref{mr:separable}, there are $g_1, g_2, \ldots \in
    \cnt{1}_c(\R^{n+1},\R^{n+1})$ such that each $g \in
    \cnt{1}_c(\R^{n+1},\R^{n+1})$ is the limit of a subsequence of $g_1, g_2,
    \ldots$. Let $T$ be the set of $t \in J$ such
    that $\tilde u$ is defined and continuous on $\Bdry{\oball 0t}$,
    $u = \tilde u$ at $\HM^{n-1}$ almost all points of $\Bdry{\oball 0t}$,
    $\textint{\Bdry{\oball 0t}}{} |\uD u|^n \ud \HM^{n-1} < \infty$, and
    $\Sigma_t(\uD g_k) = \tau_t(g_k) + f_{g_k}(t)$ for $k \in \natp$. Then
    $\LM^1(J \without T) = 0$ by~\eqref{eq:sphere:ae}. Suppose $\rho \in T$, let
    $S = \Bdry{\oball 0\rho}$, and let $v = \tilde u | S$. Since
    $\HM^{n-1}(S) < \infty$, H{\"o}lder's inequality~\cite[2.4.14]{Fed69} gives
    $\textint{S}{} |\uD u|^{n-1} \ud \HM^{n-1} < \infty$.

    Since $u = v$ at $\HM^{n-1}$ almost all points of~$S$, we have
    $f_{g_k}(\rho) = \beta_\rho(g_k)$, and $\rho \in T$ gives~\eqref{eq:sphere}
    for $g = g_k$, $k \in \natp$. Since $|\uD u(x)(|x|^{-1} x)| \le |\uD u(x)|$,
    we have
    $|\beta_\rho(g)| \le \sup |g| \textint{S}{} |\uD u|^{n-1} \ud \HM^{n-1}$ for
    $g \in \ccspace{\R^{n+1},\R^{n+1}}$. Combining this
    with~\ref{mr:push:measure}\eqref{eq:stress:bounds}, we obtain
    \begin{gather*}
        |\Sigma_\rho(\uD g)| \le \mu_{\rho}(\R^{n+1}) \sup |\uD g| \,,
        \\
        |\tau_\rho(g) + \beta_\rho(g)|
        \le \Bigl( h n^{-n/2} \mu_{\rho}(\R^{n+1}) + \textint{S}{} |\uD u|^{n-1} \ud
        \HM^{n-1} \Bigr) \sup |g| \,.
    \end{gather*}
    Thus both sides of~\eqref{eq:sphere} depend continuously on $g \in
    \cnt{1}_c(\R^{n+1},\R^{n+1})$, and~\eqref{eq:sphere} holds for all $g \in
    \cnt{1}_c(\R^{n+1},\R^{n+1})$ by the choice of $g_1, g_2, \ldots$.
\end{proof}

\begin{definition}
    \label{def:regular}
    We call $\rho$ \emph{regular} if $0 < \rho < 1$ and there is a map $v$
    as in the second part of~\ref{lem:sphere}. In this case we set $K_\rho =
    v[\Bdry{\oball 0\rho}]$.
\end{definition}

\begin{miniremark}
    \label{mr:regular}
    If $\rho$ is regular, then the map $v$ in~\ref{def:regular} is unique,
    because $\HM^{n-1} \restrict \Bdry{\oball 0\rho}$ is positive on non-empty
    relatively open subsets of $\Bdry{\oball 0\rho}$, and $K_\rho$ is a
    compact set. If $\rho$ is regular, $g \in \cnt{1}_c(\R^{n+1},\R^{n+1})$
    and $\spt g \cap K_\rho = \varnothing$, then $g(v(x)) = 0$ for $x \in
    \Bdry{\oball 0\rho}$, hence $\beta_\rho(g) = 0$, and~\ref{lem:sphere}
    yields
    \begin{equation}
        \label{eq:regular}
        \Sigma_\rho(\uD g) = \tau_\rho(g) \,.
    \end{equation}
\end{miniremark}

\begin{miniremark}
    \label{mr:structure:strategy}
    The proof of~\ref{lem:structure} applies the area formula for Sobolev
    maps of Haj{\l}asz~\cite[Theorem~12]{Haj00} to~$u$. The bound on the
    distortion implies that the resulting density of $\Sigma_\rho$ with
    respect to $\mu_{\rho}$ takes values in~$\Psi_\kappa$.
\end{miniremark}

\begin{lemma}
    \label{lem:structure}
    Suppose $0 \le R_0 < \infty$, $n^{n/2} \le \kappa < \infty$,
    \begin{equation}
        \label{eq:structure:hyp}
        |\uD u(x)|^n \le \kappa |\J u(x)|
        \quad \text{for $\LM^n$ almost all $x \in U$ with $|u(x)| > R_0$} \,,
    \end{equation}
    and $0 < \rho < 1$. Then there exist a Borel set $M \subseteq \R^{n+1}
    \without \cball 0{R_0}$ with $\HM^n(M) < \infty$, an $\HM^n \restrict M$
    measurable function $\theta : M \to [n^{n/2}, \infty)$, and an $\HM^n
    \restrict M$ measurable map $X : M \to \Psi_\kappa$ such that
    \begin{equation}
        \label{eq:structure}
        \mu_{\rho}(E) = \textint{E \cap M}{} \theta \ud \HM^n \,, \qquad
        \Sigma_\rho(F) = \textint{M}{} \theta \, F \bullet X \ud \HM^n
    \end{equation}
    whenever $E \subseteq \R^{n+1} \without \cball 0{R_0}$ is a Borel set,
    $F \in \ccspace{\R^{n+1},\Hom(\R^{n+1},\R^{n+1})}$, and $\spt F \cap
    \cball 0{R_0} = \varnothing$.
\end{lemma}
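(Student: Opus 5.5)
We apply the area formula for Sobolev maps of Haj{\l}asz~\cite[Theorem~12]{Haj00} to $u$ restricted to $\oball 0\rho \cap u^{-1}[\R^{n+1} \without \cball 0{R_0}]$. First we choose the set. Let $G$ be the set of $x \in \oball 0\rho$ such that $|u(x)| > R_0$, $x$ belongs to the good set on which Haj{\l}asz's formula applies (in particular $u$ is approximately differentiable at $x$ with derivative $\uD u(x)$), and \eqref{eq:structure:hyp} holds at $x$. Split $G$ into $G_0 = G \cap \{ x : \J u(x) = 0 \}$ and $G_1 = G \without G_0$. On $G_0$, hypothesis \eqref{eq:structure:hyp} forces $\uD u(x) = 0$. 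Hence $G_0$ contributes nothing to $\mu_\rho$ or $\Sigma_\rho$ on sets and functions supported away from $\cball 0{R_0}$. By removing an $\LM^n$ null set we may assume $u|G_1$ is a countable union of Lipschitz pieces. Let $M = u[G_1]$, modified by a Borel null set so that it is Borel. Then $M \subseteq \R^{n+1} \without \cball 0{R_0}$, and the area formula gives
\[
  \textint{M}{} \card\bigl(G_1 \cap u^{-1}\{y\}\bigr) \ud \HM^n y = \textint{G_1}{} |\J u| \ud \LM^n \le n^{-n/2} \textint{U}{} |\uD u|^n \ud \LM^n < \infty
\]
by~\ref{lem:hadamard}\eqref{eq:hadamard:amgm}. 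Since the multiplicity is at least $1$ on $M$, we get $\HM^n(M) < \infty$.

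Next we define $\theta$ and $X$. For $y \in M$ set
\[
  \theta(y) = \textsum{x \in G_1 \cap u^{-1}\{y\}}{} |\uD u(x)|^n / |\J u(x)| .
\]
The general form of the area formula, $\textint{G_1}{} f(x)|\J u(x)| \ud \LM^n x = \textint{}{} \textsum{x \in G_1 \cap u^{-1}\{y\}}{} f(x) \ud \HM^n y$, applied with $f = \mathbf{1}_E(u) |\uD u|^n/|\J u|$, gives the first identity in~\eqref{eq:structure}. Each summand lies in $[n^{n/2}, \kappa]$ by~\ref{mr:distortion} and the hypothesis, so $\theta \ge n^{n/2}$. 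Now apply the same formula with $f = F(u) \bullet |\uD u|^{n-2} \uD u \circ \uD u^* / |\J u|$, and write $|\uD u|^{n-2} \uD u \circ \uD u^* = |\uD u|^n\, Y(\uD u)$. This yields the second identity with
\[
  X(y) = \theta(y)^{-1} \textsum{x}{} \frac{|\uD u(x)|^n}{|\J u(x)|}\, Y(\uD u(x)) .
\]
This is a convex combination of the maps $Y(\uD u(x))$, and each of these lies in $\Psi_\kappa$ by~\ref{def:K}. Since $\theta < \infty$ for $\HM^n$ almost every $y \in M$ (its integral is finite), the sum is finite almost everywhere, and $X(y) \in \Psi_\kappa$ by convexity. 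On the remaining null set we set $X$ equal to a fixed element of $\Psi_\kappa$ and shrink $M$ accordingly.

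The main obstacle is measurability together with the precise form of Haj{\l}asz's theorem. We need a Borel version of $M$, and $\HM^n \restrict M$ measurability of $\theta$ and $X$, which requires measurability of fibre sums. I would handle this by decomposing $G_1$ into countably many disjoint Borel pieces $G_{1,i}$ on which $u$ is Lipschitz and injective. Fibre sums then become countable sums of functions $f \circ (u|G_{1,i})^{-1}$ on the Borel sets $u[G_{1,i}]$, which are Borel because injective Borel images are Borel. The remaining checks are routine: summability of the integrands, which is dominated by $|F| \theta$, and the fact that $\spt F \cap \cball 0{R_0} = \varnothing$ ensures only points with $|u(x)| > R_0$ contribute to $\Sigma_\rho(F)$.
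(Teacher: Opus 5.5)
Your proposal is correct and takes essentially the same route as the paper. You apply Haj{\l}asz's area formula on the set where $|u|>R_0$ and $\J u \ne 0$, with the hypothesis forcing $\uD u = 0$ where $\J u = 0$. You take $\theta$ as the fibre sum of the distortions and $X$ as the normalised fibre sum of $|\uD u|^{n-2}\uD u\circ\uD u^*/|\J u|$, which is a finite convex combination of the maps $Y(\uD u(x)) \in \Psi_\kappa$ wherever $\theta<\infty$.

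The only difference is how measurability is obtained. The paper takes it from Haj{\l}asz's proof and \cite[3.2.3(2)]{Fed69}, then uses Borel regularity of $\HM^n$ to choose a Borel $M \subseteq u[A]$ of full measure on which $\theta<\infty$. You get Borel measurability from a decomposition into injective Lipschitz pieces, which is equally valid. Just choose $M$ inside $u[G_1]$, so that $\theta \ge n^{n/2}$ holds on all of $M$.
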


\begin{proof}
    Let $P$ be the set of $x \in \oball 0\rho$ with $|u(x)| > R_0$, $\uD u(x)
    \ne 0$, and $|\uD u(x)|^n \le \kappa |\J u(x)|$. Then $\vwedge n
    \uD u(x) \ne 0$ for $x \in P$, and~\eqref{eq:structure:hyp} gives
    \begin{equation}
        \label{eq:structure:P}
        \text{$\uD u(x) = 0$ or $x \in P$}
        \quad \text{for $\LM^n$ almost all $x \in \oball 0\rho$ with $|u(x)|
          > R_0$} \,.
    \end{equation}
    Choose $\zeta \in \dspace U\R$ with $\zeta(x) = 1$ for $|x| \le \rho$, and
    let $f : \R^n \to \R^{n+1}$ be the map that equals $\zeta u$ on $U$ and $0$
    on $\R^n \without U$. Then $f \in W^{1,n}(\R^n,\R^{n+1})$, $f(x) = u(x)$ for
    $x \in \oball 0\rho$, and $\uD f(x) = \uD u(x)$ for $\LM^n$ almost all
    $x \in \oball 0\rho$. Recalling~\ref{mr:notation} and
    employing~\cite[Theorem~12]{Haj00} with $p = n$, there is an $\LM^n$ null
    set $N \subseteq \R^n$ such that
    \begin{displaymath}
        \textint{}{} g \, |\J f| \ud \LM^n
        = \textint{}{}\textsum{x \in f^{-1}[\{y\}] \without N}{} g(x) \ud \HM^n y
    \end{displaymath}
    whenever $g : \R^n \to \R$ is either non-negative and $\LM^n$ measurable
    or $\LM^n$ measurable with $g \, |\J f|$ $\LM^n$ summable.
    Let $A = P \without N$. Applying this to the functions $g$ that equal
    $\psi$ on $A$ and $0$ elsewhere, and recalling that $f = u$ on $A$, we
    obtain
    \begin{equation}
        \label{eq:structure:area}
        \textint{A}{} \psi \, |\J u| \ud \LM^n
        = \textint{}{}\textsum{x \in A \cap u^{-1}[\{y\}]}{} \psi(x) \ud \HM^n y
    \end{equation}
    whenever $\psi : A \to \R$ is either non-negative and $\LM^n \restrict A$
    measurable or $\LM^n \restrict A$ measurable with $\psi \, |\J u|$
    $\LM^n \restrict A$ summable; in particular, the integrand on the
    right-hand side is $\HM^n$ measurable; see the proof
    of~\cite[Theorem~12]{Haj00} and~\cite[3.2.3(2)]{Fed69}.

    Define
    \begin{gather}
        Q(x) = \frac{|\uD u(x)|^{n-2} \uD u(x) \circ \uD u(x)^*}{|\J u(x)|}
        = \frac{|\uD u(x)|^n}{|\J u(x)|} \, Y(\uD u(x))
        \quad \text{for $x \in A$}
        \,,
        \\
        \text{and} \quad
        \theta_0(y) = \textsum{x \in A \cap u^{-1}[\{y\}]}{}
        \frac{|\uD \ u(x)|^n}{|\J u(x)|}
        \quad \text{for $y \in \R^{n+1}$} \,.
    \end{gather}
    Since $|A \circ A^*| \le |A|^2$ for linear maps~$A$, we have $|Q(x)| \,
    |\J u(x)| \le |\uD u(x)|^n$ for $x \in A$.

    Let $E \subseteq \R^{n+1} \without \cball 0{R_0}$ be a Borel set, and
    let $F \in \ccspace{\R^{n+1},\Hom(\R^{n+1},\R^{n+1})}$ with $\spt F \cap
    \cball 0{R_0} = \varnothing$. Then $F(u(x)) = 0$ whenever $|u(x)| \le
    R_0$. Since $\LM^n(P \without A) = 0$,
    \ref{mr:push:measure}\eqref{eq:stress:borel} and~\eqref{eq:structure:P} give
    \begin{equation}
        \label{eq:structure:A}
        \mu_{\rho}(E) = \textint{A \cap u^{-1}[E]}{} |\uD u|^n \ud \LM^n \,, \qquad
        \Sigma_\rho(F) = \textint{A}{} |\J u(x)| \, F(u(x)) \bullet
        Q(x) \ud \LM^n x \,.
    \end{equation}
    By~\ref{lem:hadamard}\eqref{eq:hadamard:amgm}, each term of the sum
    defining $\theta_0(y)$ is at least $n^{n/2}$. Hence $\theta_0(y) \ge
    n^{n/2} \card (A \cap u^{-1}[\{y\}])$ for $y \in \R^{n+1}$, and $\theta_0
    = 0$ outside $u[A]$. By~\eqref{eq:structure:area} with $\psi = |\uD
    u|^n |\J u|^{-1}$, the function $\theta_0$ is $\HM^n$ measurable. Hence
    $u[A] = \{ y : \theta_0(y) > 0 \}$ is $\HM^n$ measurable. By the same
    instance of~\eqref{eq:structure:area} and by~\eqref{eq:structure:A}
    with $E$ replaced by $\R^{n+1} \without \cball 0{R_0}$,
    \begin{displaymath}
        n^{n/2} \HM^n(u[A])
        \le \textint{}{}\theta_0 \ud \HM^n
        = \mu_{\rho} \bigl( \R^{n+1} \without \cball 0{R_0} \bigr) < \infty \,.
    \end{displaymath}
    Since $\HM^n$ is Borel regular~\cite[2.10.2(1)]{Fed69}, there is
    by~\cite[2.2.3]{Fed69} a Borel set $M \subseteq u[A]$ with $\HM^n(u[A]
    \without M) = 0$ and $\theta_0(y) < \infty$ for $y \in M$. If $y \in M$,
    the set $A \cap u^{-1}[\{y\}]$ is finite and non-empty, and we let
    \begin{displaymath}
        \Upsilon(y) = \textsum{x \in A \cap u^{-1}[\{y\}]}{} Q(x)
        = \textsum{x \in A \cap u^{-1}[\{y\}]}{} \frac{|\uD u(x)|^n}{|\J
        u(x)|} \, Y(\uD u(x)) \,.
    \end{displaymath}
    Since $A \subseteq P$, $Y(\uD u(x)) \in \Psi_\kappa$ for $x \in A$.
    Hence $\theta_0(y)^{-1} \Upsilon(y)$ is a convex combination of elements
    of $\Psi_\kappa$, and belongs to $\Psi_\kappa$, for $y \in M$.
    By~\eqref{eq:structure:area} applied to the coordinates of $Q$ with
    respect to a basis of $\Hom(\R^{n+1},\R^{n+1})$, the map $\Upsilon$ is
    $\HM^n \restrict M$ measurable. We apply~\eqref{eq:structure:area} to
    the function equal to $|\uD u|^n |\J u|^{-1}$ on $A \cap
    u^{-1}[E]$ and to $0$ on $A \without u^{-1}[E]$, and to the function
    mapping $x \in A$ onto $F(u(x)) \bullet Q(x)$. Since $u[A] \without M$ is
    an $\HM^n$ null set, \eqref{eq:structure:A} yields
    \begin{displaymath}
        \mu_{\rho}(E) = \textint{E \cap M}{} \theta_0 \ud \HM^n \,, \qquad
        \Sigma_\rho(F) = \textint{M}{} F \bullet \Upsilon \ud \HM^n \,.
    \end{displaymath}
    The function $\theta = \theta_0|M$ and the map $X = (\theta_0^{-1}
    \Upsilon)|M$ have the required properties.
\end{proof}

\section{Lower mass ratios and boundedness}
\label{sec:bounded}

\begin{miniremark}
    \label{mr:DGPR}
    In this section we combine~\ref{lem:structure} with the inequality of De~Philippis, Gennaioli,
    Pigati, and Rindler~\cite[Theorem~1.1]{DGPR26}. We obtain lower mass ratios, boundedness, and continuity. The
    standing hypotheses of~\ref{mr:stress} and the notation of section~\ref{sec:stress} are
    retained.

    We use~\cite[Theorem~1.1]{DGPR26} in the following form. Suppose $1 \le m \le d$ are integers and
    $\Psi \subseteq \Hom(\R^d,\R^d)$ is non-empty, compact and convex, with
    \begin{displaymath}
        \lambda = \inf \{ \sigma_m(X) : X \in \Psi \} > 0 \,, \qquad
        \Lambda = \sup \{ \sigma_1(X) : X \in \Psi \} \,.
    \end{displaymath}
    Then there is $0 < \Gamma < \infty$, depending only on $d$, $\lambda$ and $\Lambda$, with the
    following property. Suppose $M \subseteq \R^d$ is a Borel set with $\HM^m(M) < \infty$,
    $\vartheta : M \to [0, \infty)$ is $\HM^m \restrict M$ summable, $X : M \to \Psi$ is
    $\HM^m \restrict M$ measurable, and $\zeta : \ccspace{\R^d,\Hom(\R^d,\R^d)} \to \R$ is the
    linear functional given by
    \begin{displaymath}
        \zeta(F) = \textint{M}{} \vartheta \, F \bullet X \ud \HM^m
        \quad \text{for $F \in \ccspace{\R^d,\Hom(\R^d,\R^d)}$} \,.
    \end{displaymath}
    Suppose moreover that $\nu : \ccspace{\R^d,\R^d} \to \R$ is a linear
    functional with
    \begin{equation}
        \label{eq:DGPR:sup}
        \sup \bigl\{ \nu(g) : g \in \ccspace{\R^d,\R^d},\ |g(y)| \le 1
        \text{ for $y \in \R^d$} \bigr\} < \infty \,.
    \end{equation}
    If $f \in \ccspace{\R^d}^+$, $\sup f > 0$, $\xi \in \ccspace{\R^d,\R^d}$
    and $|\xi| \le f$, then $|(\sup f)^{-1} \xi| \le 1$. Hence $\nu$
    satisfies the hypothesis imposed on $T$ in~\ref{mr:push}, with
    $W = \R^d$, and $\|\nu\|(\R^d)$ equals the supremum
    in~\eqref{eq:DGPR:sup} by~\ref{mr:push}\eqref{eq:stress:variation}.
    Suppose finally that
    \begin{equation}
        \label{eq:DGPR:div}
        \zeta(\uD g) = - \nu(g)
        \quad \text{for $g \in \dspace{\R^d}{\R^d}$} \,.
    \end{equation}
    Then
    \begin{equation}
        \label{eq:DGPR}
        \textint{M}{} \vartheta \ud \HM^m \le \Gamma \, \HM^m(M)^{1/m} \, \|\nu\|(\R^d)
        \,.
    \end{equation}
    For $m = n$, $d = n+1$ and $\Psi = \Psi_\kappa$ with $n^{n/2} \le \kappa
    < \infty$, \ref{lem:K} gives $\lambda \ge (2 \kappa^2)^{-1}$ and $\Lambda
    \le 1$. We write $\Gamma_\kappa$ for the corresponding constant~$\Gamma$;
    it depends only on $n$ and~$\kappa$.
\end{miniremark}

\begin{miniremark}
    \label{mr:DGPR:language}
    In~\cite{DGPR26}, linear maps are represented by their coefficients with respect to the standard
    basis. In that language $\sigma_k$ is the $k$-th singular value and $\bullet$ is the
    Hilbert--Schmidt product.  Moreover, \cite[Theorem~1.1]{DGPR26} is formulated for vector valued
    measures.  To the functional $\zeta$ of~\ref{mr:DGPR} corresponds the measure with values in
    $\Hom(\R^d,\R^d)$ assigning $\textint{E \cap M}{} \vartheta X \ud \HM^m$ to each Borel set
    $E \subseteq \R^d$. To $\nu$ corresponds the $\R^d$ valued measure assigning
    $\textint{E}{} \eta(y)^*(1) \ud \|\nu\| y$ to $E$, where $\eta$ is the map
    of~\ref{mr:push}\eqref{eq:stress:polar} with $T = \nu$. Since
    $\eta(y)^*(1) \bullet w = \eta(y)(w)$ for $w \in \R^d$, \ref{mr:push}\eqref{eq:stress:polar}
    shows that integrating $g \in \ccspace{\R^d,\R^d}$ against this measure with respect to
    $\bullet$ gives $\nu(g)$. Since $|\eta(y)^*(1)| = \|\eta(y)\| = 1$ for $\|\nu\|$ almost all $y$,
    the total variation measure of this measure is~$\|\nu\|$.  Thus~\ref{mr:DGPR}\eqref{eq:DGPR:div}
    states that the second measure is the divergence of the first in the sense of distributions.
\end{miniremark}

\begin{miniremark}
    \label{mr:ode}
    In the proof of~\ref{lem:lower-mass}, \ref{lem:ode} is applied to $f(t)
    = \mu_\rho \, \oball at$ with $c = 2 n^{-1/2} \Gamma_\kappa$ and $b = h
    n^{-n/2} / 2$. There~\ref{lem:ode}\eqref{eq:ode:hyp} follows
    from~\ref{mr:DGPR}\eqref{eq:DGPR} applied with a cut-off function equal
    to~$1$ on $\cball at$ and with support in $\oball as$. The term $(s -
    t)^{-1} (f(s) - f(t))$ bounds the contribution of the gradient of the
    cut-off function, and the term $b f(s)$ bounds that of~$\tau_\rho$.
    Where $2 b c f^{1/n} \le 1$, the term $b f$ is absorbed, and the
    derivative of $f^{1/n}$ is at least $(2nc)^{-1}$. This is the argument
    of~\cite[5.1.6]{Fed69}, with~\ref{mr:DGPR} in place of the isoperimetric
    inequality~\cite[4.4.2(2)]{Fed69}. In~\cite[8.3]{All72}
    and~\cite[2.5]{Men09}, the first variation of a varifold takes the role
    of~$b f$, and its smallness corresponds to the condition $2 b c f^{1/n}
    \le 1$.
\end{miniremark}

\begin{lemma}
    \label{lem:ode}
    Suppose $0 < r < \infty$, $0 < c < \infty$, $0 \le b < \infty$, $f :
    (0, r] \to (0, \infty)$ is non-decreasing, and
    \begin{equation}
        \label{eq:ode:hyp}
        f(t) \le c \, f(s)^{1/n} \bigl( b f(s) + (s - t)^{-1} (f(s) - f(t))
        \bigr)
        \quad \text{whenever $0 < t < s \le r$} \,.
    \end{equation}
    Then $f(t) \ge \min \{ (2nc)^{-1} t, (2bc)^{-1} \}^n$ for $0 < t \le r$,
    where $(2bc)^{-1} = \infty$ if $b = 0$.
\end{lemma}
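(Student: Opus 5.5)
The idea is to turn \eqref{eq:ode:hyp} into a differential inequality for $f^{1/n}$, following \cite[5.1.6]{Fed69} and the outline in~\ref{mr:ode}. Let
\begin{displaymath}
    t_0 = \sup \bigl\{ t \in (0,r] : 2 b c f(t)^{1/n} \le 1 \bigr\} \,,
\end{displaymath}
with $\sup \varnothing = 0$. Since $f$ is non-decreasing, the set $I = \{ t : 2bc f(t)^{1/n} \le 1 \}$ is an initial segment of $(0,r]$. If $t > t_0$, then $f(t) > (2bc)^{-n}$ and the claimed bound holds trivially. It therefore suffices to treat $t \in I$; if $b = 0$, then $I = (0,r]$.

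Fix $0 < t < s$ in $I$. Then $c\, b\, f(s)^{1+1/n} \le f(s)/2$. Rearranging \eqref{eq:ode:hyp} gives
\begin{displaymath}
    f(t) - \tfrac12 f(s) \le c\, f(s)^{1/n} (s-t)^{-1} \bigl( f(s) - f(t) \bigr) \,.
\end{displaymath}
This is awkward because of the $f(s)/2$ on the left. The absorption should be done before comparing $t$ and $s$. I would instead apply \eqref{eq:ode:hyp} in the form valid as $t \uparrow s$ or $s \downarrow t$, using the Lebesgue differentiation of the monotone function $f$.

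Let $t \in I$ be a point of differentiability of $f$ with $t < \sup I$, and let $s \downarrow t$. By monotonicity, $f(s) \to f(t+) = f(t)$ at such points. We obtain
\begin{displaymath}
    f(t) \le c f(t)^{1/n} \bigl( b f(t) + f'(t) \bigr) \le \tfrac12 f(t) + c f(t)^{1/n} f'(t) \,,
\end{displaymath}
hence $f(t)^{1-1/n} \le 2c f'(t)$, that is, $(f^{1/n})'(t) \ge (2nc)^{-1}$ for almost every $t$ in the interior of $I$.

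The main obstacle is that $f$ is only monotone, so integrating $f'$ gives merely $f^{1/n}(t) - f^{1/n}(\varepsilon) \ge \int_\varepsilon^t (f^{1/n})'$, which goes in the correct direction. For a monotone $g$ one has $g(t) - g(\varepsilon) \ge \int_\varepsilon^t g' \ud \LM^1$, since the jumps and the singular part are non-negative. Applying this with $g = f^{1/n}$ and using $f(\varepsilon) > 0$, we get
\begin{displaymath}
    f(t)^{1/n} \ge (2nc)^{-1} (t - \varepsilon) \quad \text{for } \varepsilon \downarrow 0 \,,
\end{displaymath}
for $t$ in $I$. This covers the endpoints of $I$ too, by monotonicity and letting interior points increase to $t$.

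In summary, $f(t) \ge \min\{(2nc)^{-1}t, (2bc)^{-1}\}^n$ on all of $(0,r]$. One should double-check the one-sided limit step at points where $f$ jumps. To avoid derivatives entirely, an alternative is the discrete version: take $s = t(1+1/k)$ and iterate. I would use the derivative route first.
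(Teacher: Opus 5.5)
Your proposal is correct and follows the paper's proof: you dispose of the case $2bc f(t)^{1/n} > 1$ directly, let $s \downarrow \tau$ in \eqref{eq:ode:hyp} at almost every $\tau < t$ to get $f(\tau) \le f(\tau)/2 + c f(\tau)^{1/n} f'(\tau)$, i.e.\ $(f^{1/n})'(\tau) \ge (2nc)^{-1}$, and integrate using that a non-decreasing function dominates the integral of its derivative. The only difference is cosmetic: the paper extends $F = f^{1/n}$ by $0$ to $(-\infty,0]$ and integrates from $0$, whereas you integrate from $\varepsilon$ and let $\varepsilon \downarrow 0$ (the detour through $0<t<s$ in $I$ and the worry about jumps can be dropped, since $f$ is continuous at its points of differentiability).
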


\begin{proof}
    Let $F = f^{1/n}$ and $0 < t \le r$. If $2 b c F(t) > 1$, then $f(t) >
    (2bc)^{-n}$. Suppose $2 b c F(t) \le 1$. We extend $F$ to a
    non-decreasing function on~$\R$ by setting $F(\tau) = 0$ for $\tau \le 0$
    and $F(\tau) = F(r)$ for $\tau > r$. By~\cite[2.9.19]{Fed69}, $F$ has a
    derivative $F'(\tau) \in \R$ at $\LM^1$ almost all $\tau \in \R$, and
    \begin{equation}
        \label{eq:ode:int}
        \textint{0}{t} F' \ud \LM^1 \le F(t) - F(0) = F(t) \,.
    \end{equation}
    Suppose $0 < \tau < t$ and $F'(\tau) \in \R$. Then $f$ is continuous
    at~$\tau$ and has the derivative $f'(\tau) = n F(\tau)^{n-1}
    F'(\tau)$. Letting $s \downarrow \tau$ in~\eqref{eq:ode:hyp} with $\tau$
    in place of~$t$, we obtain
    \begin{displaymath}
        f(\tau) \le b c F(\tau) f(\tau) + c F(\tau) f'(\tau)
        \le f(\tau) / 2 + n c f(\tau) F'(\tau) \,,
    \end{displaymath}
    where we used $F(\tau) \le F(t)$, $2 b c F(t) \le 1$ and $F(\tau)
    f'(\tau) = n f(\tau) F'(\tau)$. Since $0 < f(\tau) < \infty$, this gives
    \begin{equation}
        \label{eq:ode:diff}
        F'(\tau) \ge (2nc)^{-1}
        \quad \text{for $\LM^1$ almost all $\tau \in (0, t)$} \,.
    \end{equation}
    Combining~\eqref{eq:ode:int} and~\eqref{eq:ode:diff}, we obtain $F(t)
    \ge (2nc)^{-1} t$.
\end{proof}

\begin{miniremark}
    \label{mr:lower-mass:strategy}
    The proof of~\ref{lem:lower-mass} applies~\ref{mr:DGPR} to the linear
    functional mapping $F$ to $\Sigma_\rho(\varphi F)$, where $\varphi$ is a
    radial cut-off function centred at~$a$. The values of this functional on
    derivatives are computed from~\ref{mr:regular}\eqref{eq:regular}. The
    conclusion follows from~\ref{lem:ode}.
\end{miniremark}

\begin{lemma}
    \label{lem:lower-mass}
    Suppose $0 \le R_0 < \infty$, $n^{n/2} \le \kappa < \infty$,
    \ref{lem:structure}\eqref{eq:structure:hyp} holds, $\rho$ is regular as
    in~\ref{def:regular}, $a \in \spt \mu_{\rho}$, and
    $0 < r \le \min \{ \dist(a, K_\rho), |a| - R_0 \}$. Then
    \begin{displaymath}
        \mu_{\rho} \, \oball ar
        \ge (4 n^{1/2} \Gamma_\kappa)^{-n} \min \{ r, h^{-1} \}^n \,,
    \end{displaymath}
    where $h^{-1} = \infty$ if $h = 0$.
\end{lemma}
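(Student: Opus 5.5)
We apply \ref{lem:ode} to $f(t) = \mu_\rho \, \oball at$ for $0 < t \le r$, with $c = 2 n^{-1/2} \Gamma_\kappa$ and $b = h n^{-n/2}/2$, as announced in~\ref{mr:ode}. Since $a \in \spt \mu_\rho$, $f$ is positive, and it is clearly non-decreasing. The conclusion of~\ref{lem:ode} then reads $f(r) \ge \min\{ (4 n^{1/2}\Gamma_\kappa)^{-1} r, (h n^{-n/2} \cdot 2 n^{-1/2}\Gamma_\kappa)^{-1}\}^n$. Since $n^{(n+1)/2} \ge n^{1/2}$, the second entry is at least $(4n^{1/2}\Gamma_\kappa)^{-1} h^{-1}$, so the lower bound follows. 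Thus everything reduces to verifying~\ref{lem:ode}\eqref{eq:ode:hyp}.

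Fix $0 < t < s \le r$. Let $\varphi \in \ccspace{\R^{n+1}}^+$ be the radial Lipschitz cut-off that equals $1$ on $\cball at$, vanishes outside $\oball as$, and is linear in $|y-a|$ in between. First mollify $\varphi$ slightly to get a function of class~$\infty$ supported in $\oball as$ and equal to $1$ on $\cball a{t'}$ for some $t < t' < s$, and pass to the limit at the end. Define $\zeta(F) = \Sigma_\rho(\varphi F)$. Since $\oball as \subseteq \oball ar$ avoids $\cball 0{R_0}$ (because $r \le |a| - R_0$), \ref{lem:structure} gives the representation $\zeta(F) = \int_M \vartheta F \bullet X \ud \HM^n$ with $\vartheta = \theta \varphi$ and $X$ taking values in $\Psi_\kappa$, after replacing $M$ by $M \cap \oball as$. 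For $g \in \dspace{\R^{n+1}}{\R^{n+1}}$ we have $\uD(\varphi g) = \varphi \uD g + \uD\varphi \cdot g$. The map $\varphi g$ has support in $\oball as$, which misses $K_\rho$ because $s \le \dist(a,K_\rho)$. Hence~\ref{mr:regular}\eqref{eq:regular} gives
\[
\zeta(\uD g) = \tau_\rho(\varphi g) - \Sigma_\rho(\uD\varphi \cdot g).
\]
We set $\nu(g) = \Sigma_\rho(\uD\varphi\cdot g) - \tau_\rho(\varphi g)$. By~\ref{mr:push:measure}\eqref{eq:stress:bounds} and $|\uD\varphi \cdot g| \le |\grad\varphi|\,|g|$, we get $\|\nu\|(\R^{n+1}) \le \int |\grad \varphi| \ud\mu_\rho + h n^{-n/2} \int \varphi \ud \mu_\rho$. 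For the limiting Lipschitz cut-off this is at most $(s-t)^{-1}(f(s)-f(t)) + h n^{-n/2} f(s)$, up to boundary terms that disappear in the limit (using $\mu_\rho$ of open annuli and $t' \downarrow t$).

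Now~\ref{mr:DGPR}\eqref{eq:DGPR} yields
\[
f(t) \le \int_M \vartheta \ud \HM^n \le \Gamma_\kappa \HM^n(M \cap \oball as)^{1/n} \|\nu\|(\R^{n+1}).
\]
Since $\theta \ge n^{n/2}$ by~\ref{lem:structure}, we have $\HM^n(M\cap\oball as) \le n^{-n/2} f(s)$, whence $\HM^n(\cdot)^{1/n} \le n^{-1/2} f(s)^{1/n}$. This gives~\eqref{eq:ode:hyp} with $c = n^{-1/2}\Gamma_\kappa$ and $b = hn^{-n/2}$, and a fortiori with the slack constants of~\ref{mr:ode}. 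That slack also absorbs the approximation losses, for instance by working with $s' < s$ and letting $s' \uparrow s$.

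The main obstacle is the approximation step. \ref{mr:DGPR} needs $\zeta$ tested against smooth $g$, and $\zeta$ must be an integral over a set of finite $\HM^n$ measure, while the natural cut-off is only Lipschitz. I would handle this by convolving $\varphi$ and using that $\Sigma_\rho$ and $\tau_\rho$ are continuous under uniform convergence, so that \eqref{eq:regular} (proved for $\cnt{1}_c$ maps) applies directly to $\varphi g$ with $\varphi$ smooth. The variation bound then passes to the limit through the Radon-measure estimates of $\mu_\rho$ on annuli.
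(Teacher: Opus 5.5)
Your proposal is correct and is essentially the paper's proof. It uses the same functionals $\zeta(F)=\Sigma_\rho(\varphi F)$ and $\nu$ (the paper writes $\Sigma_\rho(\uD\varphi\cdot g)$ as $\int_M \theta\, g\bullet X(\grad\varphi)\ud\HM^n$ and bounds it with $\|X\|\le 1$, where you use \ref{mr:push:measure}\eqref{eq:stress:bounds} directly), the same application of \ref{mr:DGPR} with $\HM^n(M\cap\oball as)\le n^{-n/2}f(s)$, and \ref{lem:ode} with the same constants; the approximation you single out as the main obstacle is unnecessary, because the paper directly takes a smooth $\varphi(y)=\gamma(|y-a|)$ with $\gamma(\tau)=1$ for $\tau\le t$, $\gamma(\tau)=0$ for $\tau\ge t+3(s-t)/4$ and $0\le-\gamma'\le 2(s-t)^{-1}$, and this factor~$2$ is exactly the slack built into $c=2n^{-1/2}\Gamma_\kappa$.
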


\begin{proof}
    Let $f(t) = \mu_{\rho} \, \oball at$ for $0 < t \le r$. Then $f$ is
    non-decreasing, and $0 < f(t) < \infty$ since $a \in \spt \mu_{\rho}$.
    Since $\dist(a, K_\rho) \ge r$ and $|y| > |a| - r \ge R_0$ for $y \in
    \oball ar$,
    \begin{equation}
        \label{eq:lower-mass:away}
        \oball ar \cap \bigl( K_\rho \cup \cball 0{R_0} \bigr) = \varnothing
        \,.
    \end{equation}
    Let $M$, $\theta$ and $X$ be as in~\ref{lem:structure}, and let $0 < t
    < s \le r$. Choose a function $\gamma : \R \to \R$ of class~$\infty$
    with $0 \le \gamma \le 1$, $\gamma(\tau) = 1$ for $\tau \le t$,
    $\gamma(\tau) = 0$ for $\tau \ge t + 3(s-t)/4$, and $0 \le - \gamma'
    \le 2 (s - t)^{-1}$. Define $\varphi(y) = \gamma(|y - a|)$ for $y \in
    \R^{n+1}$. Then $\varphi$ is of class~$\infty$, $0 \le \varphi \le 1$,
    $\spt \varphi \subseteq \oball as$, $\varphi(y) = 1$ for $|y - a| \le
    t$, and
    \begin{equation}
        \label{eq:lower-mass:grad}
        |\grad \varphi(y)| \le 2 (s - t)^{-1} \,, \qquad
        \grad \varphi(y) = 0 \text{ unless $t < |y - a| < s$} \,.
    \end{equation}
    Let $\zeta : \ccspace{\R^{n+1},\Hom(\R^{n+1},\R^{n+1})} \to \R$ and $\nu
    : \ccspace{\R^{n+1},\R^{n+1}} \to \R$ be the linear functionals given by
    \begin{displaymath}
        \zeta(F) = \Sigma_\rho(\varphi F) \,, \qquad
        \nu(g) = \textint{M}{} \theta \, g \bullet X(\grad \varphi) \ud \HM^n
        - \tau_\rho(\varphi g) \,.
    \end{displaymath}
    Since $\spt \varphi \subseteq \oball as$, \eqref{eq:lower-mass:away}
    and~\ref{lem:structure}\eqref{eq:structure} give
    \begin{equation}
        \label{eq:lower-mass:rep}
        \zeta(F) = \textint{M \cap \oball as}{} \varphi \theta \, F \bullet X \ud
        \HM^n
        \quad \text{for $F \in \ccspace{\R^{n+1},\Hom(\R^{n+1},\R^{n+1})}$}
        \,.
    \end{equation}
    Suppose $g \in \dspace{\R^{n+1}}{\R^{n+1}}$. Then $\varphi g \in
    \cnt{1}_c(\R^{n+1},\R^{n+1})$, $\spt(\varphi g) \subseteq \oball as
    \subseteq \R^{n+1} \without K_\rho$, and $\uD(\varphi g) = \varphi \uD g
    + \uD\varphi \cdot g$. From~\ref{mr:regular}\eqref{eq:regular},
    \ref{mr:notation}\eqref{eq:notation:rank-one},
    \eqref{eq:lower-mass:away} and~\ref{lem:structure}\eqref{eq:structure}
    we infer
    \begin{displaymath}
        \zeta(\uD g)
        = \Sigma_\rho(\uD(\varphi g)) - \Sigma_\rho(\uD\varphi \cdot g)
        = \tau_\rho(\varphi g)
        - \textint{M}{} \theta \, g \bullet X(\grad \varphi) \ud \HM^n
        = - \nu(g) \,.
    \end{displaymath}
    By~\ref{lem:K}, $\|X(y)\| = \sigma_1(X(y)) \le 1$ for $y \in M$. Suppose
    $g \in \ccspace{\R^{n+1},\R^{n+1}}$ and $|g(y)| \le 1$ for $y \in
    \R^{n+1}$. Then $|g \bullet X(\grad \varphi)| \le |\grad \varphi|$
    on~$M$ and $|\varphi g| \le \varphi$. Combining this
    with~\eqref{eq:lower-mass:grad}, \ref{lem:structure}\eqref{eq:structure}
    and~\ref{mr:push:measure}\eqref{eq:stress:bounds}, and recalling that $0 \le
    \varphi \le 1$ and $\spt \varphi \subseteq \oball as$, we obtain
    \begin{multline*}
        \nu(g)
        \le 2 (s - t)^{-1} \textint{M \cap \{ y : t < |y - a| < s \}}{} \theta \ud
        \HM^n + h n^{-n/2} \textint{}{}\varphi \ud \mu_{\rho}
        \\
        \le 2 (s - t)^{-1} \bigl( f(s) - f(t) \bigr) + h n^{-n/2} f(s) \,.
    \end{multline*}
    Hence $\nu$ satisfies~\ref{mr:DGPR}\eqref{eq:DGPR:sup}, and, by~\ref{mr:DGPR},
    $\|\nu\|(\R^{n+1}) \le 2 (s - t)^{-1} ( f(s) - f(t) ) + h n^{-n/2}
    f(s)$. By~\ref{lem:structure}\eqref{eq:structure},
    $\textint{M \cap \oball as}{} \theta \ud \HM^n = f(s) < \infty$. Since $\theta \ge n^{n/2}$ on $M$,
    we infer $\HM^n(M \cap \oball as) \le n^{-n/2} f(s)$. Since $\varphi(y) = 1$
    for $|y - a| \le t$, \ref{lem:structure}\eqref{eq:structure} also gives
    \begin{displaymath}
        \textint{M \cap \oball as}{} \varphi \theta \ud \HM^n
        \ge \textint{M \cap \oball at}{} \theta \ud \HM^n = f(t) \,.
    \end{displaymath}
    We apply~\ref{mr:DGPR} with $m = n$, $d = n+1$, $\Psi = \Psi_\kappa$, the
    Borel set $M \cap \oball as$, the function $\vartheta = \varphi \theta | M
    \cap \oball as$, the map $X | M \cap \oball as$, and the linear functionals
    $\zeta$ and~$\nu$. By~\eqref{eq:lower-mass:rep} and the preceding
    estimates, \ref{mr:DGPR}\eqref{eq:DGPR} yields
    \begin{displaymath}
        f(t) \le \Gamma_\kappa n^{-1/2} f(s)^{1/n} \bigl( 2 (s - t)^{-1}
        (f(s) - f(t)) + h n^{-n/2} f(s) \bigr) \,.
    \end{displaymath}
    This is~\ref{lem:ode}\eqref{eq:ode:hyp} with $c = 2 n^{-1/2}
    \Gamma_\kappa$ and $b = h n^{-n/2} / 2$. Since $2nc = 4 n^{1/2}
    \Gamma_\kappa$ and $(2bc)^{-1} = 2 n^{(n+2)/2} (4 n^{1/2} \Gamma_\kappa
    h)^{-1}$, \ref{lem:ode} with $t = r$ yields the conclusion.
\end{proof}

\begin{theorem}
    \label{thm:bounded}
    Suppose $0 \le h < \infty$, $H$ and $u$ are as in~\ref{def:hsys}, $0 \le
    R_0 < \infty$, $n^{n/2} \le \kappa < \infty$, and
    \begin{displaymath}
        |\uD u(x)|^n \le \kappa |\J u(x)|
        \quad \text{for $\LM^n$ almost all $x \in U$ with $|u(x)| > R_0$} \,.
    \end{displaymath}
    Then $u \in \Lp{\infty}(\oball 0\rho, \R^{n+1})$ for $0 < \rho < 1$.
\end{theorem}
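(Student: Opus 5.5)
Given $0<\rho<1$, I would first fix a regular radius $\rho'$ with $\rho<\rho'<1$. Regular radii form a set of full measure by~\ref{lem:sphere}, so such a $\rho'$ exists. Then $K_{\rho'}$ is compact by~\ref{mr:regular}. Set $R_1 = \sup\{|y| : y \in K_{\rho'}\} + R_0 + 1 < \infty$ and $r = 1$. Write $c_0 = (4n^{1/2}\Gamma_\kappa)^{-n}\min\{1,h^{-1}\}^n > 0$. I will show that $\spt \mu_{\rho'}$ meets $\R^{n+1}\without\cball 0{R_2}$ in a bounded set for some finite $R_2$, and then transfer this to $u$.

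\emph{Step 1: a finite packing.} Let $a \in \spt\mu_{\rho'}$ with $|a| \ge R_1 + 1$. Then $\dist(a,K_{\rho'}) \ge |a| - \sup|K_{\rho'}| \ge 1$ and $|a| - R_0 \ge 1$. Hence~\ref{lem:lower-mass} applies with $r=1$ and gives $\mu_{\rho'}\,\oball a1 \ge c_0$. Take points $a_1,\dots,a_k$ of $\spt\mu_{\rho'} \without \oball 0{R_1+1}$ with mutual distances at least $2$. The balls $\oball{a_i}1$ are then disjoint, so $kc_0 \le \mu_{\rho'}(\R^{n+1}) \le \int_U|\uD u|^n\ud\LM^n < \infty$. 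Taking a maximal such family (which is finite), every point of $\spt\mu_{\rho'}\without\oball 0{R_1+1}$ lies within distance $2$ of some $a_i$. Therefore $\spt\mu_{\rho'} \subseteq \cball 0{R_2}$ with $R_2 = R_1 + 1 + \max_i|a_i| + 2$.

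\emph{Step 2: from the support of $\mu_{\rho'}$ to $u$.} By~\ref{mr:push:measure}\eqref{eq:stress:borel}, $\int_{\oball 0{\rho'}\cap u^{-1}[V]}|\uD u|^n\ud\LM^n = 0$ for $V = \R^{n+1}\without\cball 0{R_2}$. So $\uD u = 0$ almost everywhere on $\oball 0{\rho'}\cap\{|u|>R_2\}$. To conclude that $u$ is bounded, I would apply the chain rule to the truncation $w = (|u|-R_2)^+$. This $w$ is a Sobolev function on $\oball 0{\rho'}$ whose weak gradient vanishes almost everywhere, since $\uD w = 0$ a.e.\ on $\{|u|\le R_2\}$ and $\uD u = 0$ a.e.\ on $\{|u|>R_2\}$. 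Hence $w$ equals a constant $C$ on the connected ball $\oball 0{\rho'}$.

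\emph{Step 3: the constant vanishes.} If $C>0$, then $|u| = R_2 + C$ a.e.\ in $\oball 0{\rho'}$, and $\uD u = 0$ a.e.\ there. So $u$ is constant, $\mu_{\rho'} = 0$, and $u$ is bounded anyway. In every case $|u| \le R_2 + C$ a.e.\ on $\oball 0{\rho'} \supseteq \oball 0\rho$.

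The main point needing care is Step 1. Specifically, lower mass ratios must hold uniformly at all far-away support points. This is ensured by choosing $r=1$ and staying away from both the compact image $K_{\rho'}$ of the sphere and the ball $\cball 0{R_0}$. Everything else is bookkeeping.
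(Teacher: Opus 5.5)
Your proposal is correct and follows the paper's proof. You fix a regular $\rho'$, apply \ref{lem:lower-mass} with $r=1$ at support points far from $K_{\rho'}\cup\cball 0{R_0}$, bound a maximal $2$-separated family by $\mu_{\rho'}(\R^{n+1})<\infty$, and use \ref{mr:push:measure}\eqref{eq:stress:borel} to get $\uD u=0$ a.e.\ where $|u|$ is large. The differences are cosmetic: you finish with the scalar truncation $(|u|-R_2)^+$, whereas the paper composes $u$ with the smooth map $\psi(y)=\gamma(|y|)\,y$ and applies Poincar\'e; and your Step~3 is superfluous, since $(|u|-R_2)^+=C$ a.e.\ already gives $|u|\le R_2+C$ a.e.
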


\begin{proof}
    Let $0 < \rho < 1$. By~\ref{lem:sphere} there is a regular $\rho'$ with
    $\rho < \rho' < 1$. Let
    \begin{displaymath}
        c_0 = (4 n^{1/2} \Gamma_\kappa)^{-n} \min \{ 1, h^{-1} \}^n \,, \qquad
        W = \{ y : |y| \le R_0 + 1 \} \cup \{ y : \dist(y, K_{\rho'}) \le 1
        \} \,.
    \end{displaymath}
    Since $K_{\rho'}$ is compact, $W$ is bounded. If $a \in \spt \mu_{\rho'}
    \without W$, then $\dist(a, K_{\rho'}) > 1$ and $|a| - R_0 > 1$; hence
    $\mu_{\rho'} \, \oball a1 \ge c_0$ by~\ref{lem:lower-mass}. If $a_1,
    \ldots, a_N \in \spt \mu_{\rho'} \without W$ and $|a_i - a_j| \ge 2$ for
    $i \ne j$, then the balls $\oball{a_i}1$ are disjoint, and
    \begin{displaymath}
        N c_0 \le \mu_{\rho'}(\R^{n+1})
        = \textint{\oball 0{\rho'}}{} |\uD u|^n \ud \LM^n < \infty \,.
    \end{displaymath}
    Choosing such points with $N$ maximal, we obtain $\spt \mu_{\rho'}
    \without W \subseteq \bigcup_{i=1}^N \oball{a_i}2$. Hence there is $0 <
    R_1 < \infty$ with $\spt \mu_{\rho'} \subseteq \cball 0{R_1}$.

    Choose $\gamma : \R \to \R$ of class~$\infty$ with $\gamma(t) = 0$ for $t
    \le R_1$ and $\gamma(t) = 1$ for $t \ge R_1 + 1$, and define $\psi :
    \R^{n+1} \to \R^{n+1}$ by $\psi(y) = \gamma(|y|) \, y$ for $y \in
    \R^{n+1}$. Then $\psi$ is of class~$\infty$, $\uD\psi$ is bounded,
    $\uD\psi(y) = 0$ for $|y| \le R_1$, and $\psi(y) = y$ for $|y| \ge R_1 +
    1$. The proof of~\cite[Lemma~7.5]{GT01} applies to vector valued maps and
    shows that $w = \psi \circ u | \oball 0{\rho'}$ belongs to
    $W^{1,n}(\oball 0{\rho'},\R^{n+1})$ and $\uD w = \uD\psi(u) \circ \uD
    u$. Since $\mu_{\rho'}(\R^{n+1} \without \spt \mu_{\rho'}) = 0$
    by~\cite[2.2.5]{Fed69}, \ref{mr:push:measure}\eqref{eq:stress:borel} yields
    \begin{equation}
        \label{eq:bounded:alt}
        \text{$\uD u(x) = 0$ or $|u(x)| \le R_1$}
        \quad \text{for $\LM^n$ almost all $x \in \oball 0{\rho'}$} \,.
    \end{equation}
    Hence $\uD w = 0$ at $\LM^n$ almost all points of $\oball 0{\rho'}$.
    Since $\oball 0{\rho'}$ is convex, the Poincar{\'e}
    inequality~\cite[(7.45)]{GT01}, applied to the components of $w$, yields
    $c \in \R^{n+1}$ such that $w(x) = c$ for $\LM^n$ almost all $x \in
    \oball 0{\rho'}$. Since $u(x) = w(x)$ whenever $|u(x)| \ge R_1 + 1$, we
    obtain $|u(x)| \le \max \{ R_1 + 1, |c| \}$ for $\LM^n$ almost all $x \in
    \oball 0{\rho'}$. Hence $u \in \Lp{\infty}(\oball 0\rho, \R^{n+1})$.
\end{proof}

\begin{corollary}
    \label{cor:continuity}
    Suppose, in addition to the hypotheses of~\ref{thm:bounded}, that $H$ is
    continuous. Then $u$ has a representative which, for every compact subset
    $C$ of~$U$, is H{\"o}lder continuous on $C$ with some positive exponent.
    If moreover $\Lip (H|C) < \infty$ for every compact subset $C$
    of~$\R^{n+1}$, then there is $0 < \alpha < 1$ such that $u$ has a
    representative of class~$1$ whose derivative is H{\"o}lder continuous with
    exponent~$\alpha$ on every compact subset of~$U$.
\end{corollary}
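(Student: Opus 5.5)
The plan is to reduce the statement to the result of Fusco, Kristensen, Leone, and Verde~\cite[Proposition~1.1]{FKLV18}, with local boundedness supplied by~\ref{thm:bounded}. Fix a compact set $C \subseteq U$ and choose $0 < \rho < 1$ with $C \subseteq \oball 0\rho$. By~\ref{thm:bounded}, $u \in \Lp{\infty}(\oball 0{\rho'},\R^{n+1})$ for every $\rho' < 1$. In particular $u$ is a locally bounded weak solution on~$U$: the test functions in~\ref{def:hsys} are exactly those for which~\cite{FKLV18} formulates weak solutions, or at least they contain them. Hence the hypotheses of~\cite[Proposition~1.1]{FKLV18} are met on every ball $\oball 0{\rho'}$.

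For the first assertion, $H$ is bounded and continuous. Then~\cite[Proposition~1.1]{FKLV18} yields, on each ball $\oball 0{\rho'}$, a representative that is H{\"o}lder continuous on compact subsets with some positive exponent, which may depend on the ball. Two continuous representatives agree everywhere on the intersection of their domains, because $\LM^n$ charges every non-empty open set. So the representatives obtained for an increasing sequence $\rho'_j \uparrow 1$ patch together to one representative $\tilde u$ on~$U$. For a compact $C$, choose $j$ with $C \subseteq \oball 0{\rho'_j}$; then $\tilde u | C$ is H{\"o}lder continuous with the exponent provided on that ball. This is all the statement asks, since the exponent is allowed to depend on~$C$.

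For the second assertion, $H$ is Lipschitz on compact sets. The key observation is that only the values of $H$ on the bounded set $u[\oball 0{\rho'}]$ (up to null sets) enter the equation on $\oball 0{\rho'}$. So on $\cball 0{R}$, with $R \ge \|u\|_{\Lp{\infty}(\oball 0{\rho'})}$, I will replace $H$ by a bounded globally Lipschitz function agreeing with~$H$ there; one can use a Kirszbraun/McShane extension of $H | \cball 0R$ truncated by~$h$. Then~\cite{FKLV18} gives $u \in \cnt{1}$ with locally H{\"o}lder continuous derivative. The exponent in the statement is a single $\alpha$ independent of the compact set. I expect to obtain this from~\cite{FKLV18}, where for $n$-Laplacian type systems the $C^{1,\alpha}$ exponent comes from the Uhlenbeck-type regularity for the homogeneous $n$-Laplace system and depends only on~$n$, with the bounded right-hand side acting as a perturbation. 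If~\cite{FKLV18} only gives an exponent depending on the ball, I would instead note that the exponent is determined by $n$ and bounded data. Failing that, I would take $\alpha$ as the exponent on one ball, and argue that on larger balls the same scheme gives at least this exponent on compact subsets.

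The main obstacle is checking that the notion of weak solution and the hypotheses in~\cite[Proposition~1.1]{FKLV18} match~\ref{def:hsys}: the class of test functions, the form of $\J u$, and whether global boundedness of $H$ is required. This includes the localisation from $U$ to $\oball 0{\rho'}$, where test functions in $W^{1,n}_0(\oball 0{\rho'}) \cap \Lp{\infty}$ extend by zero to admissible test functions on~$U$. The uniformity of $\alpha$ in the second assertion is the other point needing care.
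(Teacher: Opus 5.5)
Your reduction is the paper's: local boundedness from~\ref{thm:bounded}, then \cite[Proposition~1.1]{FKLV18}. The difference is that the paper applies that proposition once, on the bounded Lipschitz domain~$U$, because it already covers locally bounded weak solutions and $H$ that is Lipschitz on compact sets. The point you flag as the main obstacle is a normalisation only. By \ref{def:hsys}, $u$ is a weak solution of \cite[(1.1)]{FKLV18} in the sense of \cite[Definition~1.1]{FKLV18} with $H$ replaced by $(-1)^n n^{-n/2} H$. This constant multiple is still bounded and continuous, and still Lipschitz on compact sets under the extra hypothesis.

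Your localisation to balls $\oball 0{\rho'}$ and the McShane extension of $H|\cball 0R$ are legitimate but unnecessary. They are also exactly what makes the single exponent in the second assertion a problem for you. Applied ball by ball, with $H$ modified differently on each ball, the proposition only gives exponents $\alpha_{\rho'}$, and nothing in your argument prevents $\alpha_{\rho'} \to 0$ as $\rho' \uparrow 1$. Your last fallback is unjustified as stated: fixing $\alpha$ on one ball and claiming that larger balls give at least this exponent needs an argument you do not give. Your appeal to an exponent depending only on~$n$ is an expectation about \cite{FKLV18}, not an argument. Drop the localisation and apply \cite[Proposition~1.1]{FKLV18} directly on~$U$ with $(-1)^n n^{-n/2} H$. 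This gives one $\alpha$ for all compact subsets, which is how the paper concludes. Your patching of continuous representatives in the first part is correct, but it also becomes unnecessary.
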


\begin{proof}
    Each compact subset of~$U$ is contained in $\oball 0\rho$ for some $0 < \rho < 1$; hence, $u$ is
    locally bounded by~\ref{thm:bounded}. Recalling~\ref{def:hsys}\eqref{eq:hsys} one readily
    verifies that $u$ is a weak solution in the sense of~\cite[Definition~1.1]{FKLV18} of the
    system~\cite[(1.1)]{FKLV18} on the bounded Lipschitz domain~$U$, with $H$ replaced by
    $(-1)^n n^{-n/2} H$. This function is bounded and continuous. Under the additional hypothesis,
    its restriction to each compact subset of~$\R^{n+1}$ is Lipschitz.
    Hence~\cite[Proposition~1.1]{FKLV18} yields both conclusions.
\end{proof}

\begin{remark}
    \label{rem:direct}
    After the present work had been completed, F.~Rindler suggested to the author, in a private
    communication, that the method developed by Gennaioli and Rindler~\cite{GR26} for the Vanishing
    Mass Conjecture could be applied to $H$-systems directly, without passing through the
    Michael--Simon-type inequality of~\cite[Theorem~1.1]{DGPR26}. It would be interesting to carry
    this out. The approach presented here is, however, geometric. Through the push-forwards
    $\mu_\rho$ and $\Sigma_\rho$, it regards the image of~$u$ as a ``varifold'' whose first variation
    is controlled by the $H$-system. Boundedness then follows from lower mass ratios, as in the
    regularity theory of varifolds; see~\ref{mr:ode}. The author believes that this point of view is
    of independent interest, because it places the regularity of $H$-systems within geometric
    measure theory.
\end{remark}

\subsection*{Tool and computational resource disclosure}
\addcontentsline{toc}{section}{\numberline{}Tool and computational resource disclosure}

The author has had the idea to considering the image of~$u$ as a ``varifold'' in~$\R^{n+1}$ and
trying to prove lower density ratio bounds ever since reading the paper of Mou and Yang~\cite{MY96}
several years ago. This later became his motivation to study anisotropic geometric variational
problems. However, the main ingredient -- the monotonicity formula -- was not available.
When~\cite{DGPR26} (which builds on~\cite{GR26}) was released, the author immediately knew it could
be applied to $H$-systems. Claude Code (Opus 5 and 5.5) was given the author's pile of ten-year-old
scribbled \LaTeX{} notes, the~survey of Strzelecki and Schikorra~\cite[Problem~2.4]{SS17} for
context, the paper of De~Philippis, Gennaioli, Pigati, and Rindler~\cite{DGPR26}, and the main idea
of the author together with instructions on what the proof should look like. This produced the first
draft. After that, tens of subsequent prompts to Claude Code were employed to force a more
Federer-flavoured GMT presentation and also the use of results from the literature. The author
checked and adjusted the whole manuscript. He takes full responsibility for it.

\subsection*{Acknowledgements}
\addcontentsline{toc}{section}{\numberline{}Acknowledgements}

This~research was financed by the \href{https://ncn.gov.pl/}{National Science Centre Poland} grant
Sonata Bis number 2022/46/E/ST1/00328.

{\small
    \bibliography{hsys-michael-simon}{}
    \bibliographystyle{halpha}
}

\bigskip

{\small \noindent
  Sławomir Kolasiński
  \\
  Instytut Matematyki,
  Uniwersytet Warszawski
  \\
  ul. Banacha 2, 02-097 Warszawa, Poland
  \\
  \texttt{s.kolasinski@mimuw.edu.pl}
}

\end{document}